\theoremstyle{plain}
\newtheorem{theorem}{Theorem}[section]
\newtheorem{lemma}[theorem]{Lemma}
\newtheorem{corollary}[theorem]{Corollary}
\newtheorem{proposition}[theorem]{Proposition}
\theoremstyle{definition}
\newtheorem{definition}{Definition}[section]
\newtheorem{example}[definition]{Example}
\theoremstyle{remark}
\newtheorem{remark}[theorem]{Remark}
\newcommand{\bai}{\hspace{6pt}}
\def\GOE{\ensuremath{\textrm{GoE}}\xspace}
\def\SDS{\ensuremath{\textrm{SDS}}\xspace}
\date{}
\title{Garden-of-Eden states and fixed points\\ of monotone dynamical systems}
\author{Ricky X. F. Chen$^{1}$, Henning S. Mortveit$^{1,2}$, Christian M. Reidys$^{1,2}$\\
	\small $^1$Biocomplexity Institute, Virginia Tech, 1015 Life Science Circle, Blacksburg, VA 24061, USA\\[-0.8ex]
	\small $^2$Dept. of Mathematics, Virginia Tech, Blacksburg, VA 24061, USA\\
		\small\tt chenshu731@sina.com, hmortvei@vt.edu, duck@santafe.edu
}
\begin{document}
\maketitle
\noindent{\bf Abstract.}  In this paper we analyze Garden-of-Eden (\GOE) states and fixed points of
monotone, sequential dynamical systems (\SDS). For any monotone \SDS and fixed update schedule, we
identify a particular set of states, each state being either a \GOE state or reaching a fixed
point, while both determining if a state is a \GOE state and finding out all fixed points are generally hard. As a result, we show that the maximum size of their limit cycles is strictly less than
${n\choose \lfloor n/2 \rfloor}$. We connect these results to the Knaster-Tarski theorem and the LYM
inequality. Finally, we establish that there exist monotone, parallel dynamical systems (PDS) that
cannot be expressed as monotone \SDS, despite the fact that the converse is always true.

\vskip 12pt

\noindent{\bf Keywords.} Monotone dynamical system, Fixed point, Garden-of-Eden state, Limit cycle,
Sequentialization, LYM inequality

\noindent{\bf AMS subject classifications.} 68Q80, 06A06, 93C55

%%%
%%%%%%%%%%%%%%%%%%%%%%%%%%%%%%%%%%%%%%%%%%%%%%%%%%%%%%%%%%%%%%%%%%%%%%%%%%%%%%%%%%%%%%%%%%%%%
%%%
\section{Introduction}
%%%
%%%%%%%%%%%%%%%%%%%%%%%%%%%%%%%%%%%%%%%%%%%%%%%%%%%%%%%%%%%%%%%%%%%%%%%%%%%%%%%%%%%%%%%%%%%%%
%%%

The study of dynamical systems is central in modern mathematics and its applications in other fields
such as physics, computer science, and biology. In this paper, we investigate discrete-time
dynamical systems over graphs. Given a graph~$G$, each vertex of~$G$ has a state contained in some
finite set~$P$.  At each time step, some or all of the vertices update their states according to
their respective local functions or updating rules to generate discrete-time dynamics. The precise
manner in which the local functions assemble to a dynamical system map is called the update
mechanism. Examples of classes of such dynamical systems include Boolean networks~\cite{kauf},
Cellular Automata~(CA)~\cite{vonn,wolf}, Hopfield networks~\cite{hopfield}, and sequential dynamical
systems (\SDS\footnote{We will write \SDS in singular as well as plural
  form.})~\cite{rei1,rei2,rei3,rei4,rei5}.

For a graph~$G$ on~$n$ vertices there are $|P|^n < \infty$ distinct states.  As a result, the
iterates of any system state will eventually cycle through a subset of system states, called
\emph{periodic points}. As a result, the system dynamics generates a directed graph, called the
\emph{phase space}, consisting of a collection of disconnected cycles with trees attached at cycle
vertices. Accordingly, the dynamics can be characterized by these cycles (\emph{limit cycles}), the
leaves of the attached trees (Garden-of-Eden (\GOE) states), and the non-leaf tree states (\emph{transient
states}) connecting \GOE states and periodic points.  A limit cycle consisting of only one periodic
point is called a \emph{fixed point}.

Systems composed of monotone functions were studied
in~\cite{goles-mon,laub,and-or,sontag1,sontag2,mon-neural1,mon-neural2}, and systems of linear
functions and monomial functions were analyzed in~\cite{chr,linear} and~\cite{monomial},
respectively.  A number of studies are concerned with the existence and number of \GOE states and
fixed points~\cite{goe1,moore,myhill,aracena,mon-neural2}, as well as the size of limit
cycles~\cite{goles-mon,macmor}.

In this paper, we study dynamical systems having monotone, local functions. One key result
characterizes a set of states that are either a \GOE-state or that eventually reach a fixed point.
%%,respectively.
Notably, this set depends exclusively on monotonicity and not the particular choice of monotone
local functions.

The paper is organized as follows. In Section~\ref{sec2}, we recall the notation of \SDS and
monotone \SDS. In Section~\ref{sec3}, we study monotone \SDS in detail. Here we establish several
key results and applications. For example, we show that in any monotone \SDS on~$n$ vertices, the
probability of a random state being either a \GOE state or reaching a fixed point under a randomly
chosen update schedule is at least~$\frac{n}{2^{n-1}}$.  Furthermore, we prove that the maximum size
of limit cycles of monotone \SDS is strictly less than~${n\choose \lfloor n/2 \rfloor}$.  We also
refine the LYM inequality and present a finite version of the Knaster-Tarski theorem, from which
it follows that if there exists a non-trivial periodic state then there exist at least two trivial
periodic states (i.e.,~fixed points.)

In Section~\ref{sec4}, we discuss the sequentialization of monotone, parallel dynamical systems
(PDS), that is, the construction of a monotone \SDS whose graph~$G$ has the same number of vertices
and which has exactly the same dynamics as a given monotone PDS.  We prove that there exists
monotone PDS that cannot be sequentialized, and we provide sufficient and necessary conditions for
a monotone PDS to have a monotone sequentialization.

%%%
%%%%%%%%%%%%%%%%%%%%%%%%%%%%%%%%%%%%%%%%%%%%%%%%%%%%%%%%%%%%%%%%%%%%%%%%%%%%%%%%%%%%%%%%%%%%%%
%%%
\section{Basic Definitions}\label{sec2}
%%%
%%%%%%%%%%%%%%%%%%%%%%%%%%%%%%%%%%%%%%%%%%%%%%%%%%%%%%%%%%%%%%%%%%%%%%%%%%%%%%%%%%%%%%%%%%%%%%
%%%

Let~$G$ be a simple graph with vertex set~$V(G)=\{1,2,\ldots,n\}$ with each vertex having a
state taken from a finite set~$P$.
A state of the system is a tuple~$X=[x_1, x_2,\ldots, x_n]\in P^n$ where the $i$-th coordinate
represents the state of the vertex~$i$.
A function~$f_{i}$ is used to update the state of~$i$ based on the states of its neighbors and
itself.
A permutation~$\pi=\pi_1\pi_2\cdots\pi_n$ of the vertices in~$V(G)$ is called an update schedule.
The system dynamics is generated by sequentially updating the states of all vertices
according to~$\pi$, that is, the vertex labeled~$\pi_{i+1}$ is updated after the vertex labeled
by~$\pi_i$. We denote the associated~\SDS by the triple~$(G,f = (f_i)_i,\pi)$. The map resulting from a single application of this update procedure is denoted~$F_\pi$ (or $(G,f,\pi)$ by abuse of notation).

By inflation, each local function~$f_{i}$ induces the function
\begin{equation*}
	F_{i} \colon P^n \rightarrow P^n, \quad [x_1,\dots, x_i,\dots, x_n]
	\mapsto
	[x_1,\dots, f_{i}, \dots, x_n],
\end{equation*}
where the arguments taken by~$f_{i}$ are the current states of~$i$ and its neighbors, and we have
\begin{equation*}
	F_{\pi}=F_{\pi_n}\circ F_{\pi_{n-1}}\circ \dots \circ F_{\pi_1} \;.
\end{equation*}
As mentioned above, the directed graph on~$P^n$ with directed edges~$\bigl(X, F_{\pi}(X)\bigr)$
where~$X\in P^n$ is called the phase space of the \SDS. A state~$X$ is called a Garden-of-Eden (\GOE) state
if there is no state~$Y$ such that~$F_{\pi}(Y)=X$.
Directed cycles in the phase space are called limit cycles, and states contained in limit cycles are
called attractors or periodic states. States contained in a limited cycle of length no less than
two are called non-trivial periodic states. A state~$X$ is called a fixed point if~$F_{\pi}(X)=X$.

Unless stated otherwise, we will in the following assume~$P=\mathbb{F}_2=\{0,1\}$ with
ordering~$0<1$. Let~`$\leq$' be the partial order on~$\mathbb{F}_2^q$ ($q\geq 1$) given by
\begin{equation*}
	[x_1,x_2,\dots , x_q]\leq [y_1,y_2,\dots, y_q] \;,
\end{equation*}
iff~$x_i\leq y_i$ in~$\mathbb{F}_2$ for all~$1\leq i \leq q$.  We shall denote the
minimum~$[0,0,\ldots, 0]$ by~${\bf 0}$ and the maximum~$[1,1,\ldots, 1]$ by~${\bf 1}$.

%%%
%%%%%%%%%%%%%%%%%%%%%%%%%%%%%%%%%%%%%%%%%%%%%%%%%%%%%%%%%%%%%%%%%%%%%%%%%%%%%%%%%%%%
%%%
\begin{definition} A function
	$g \colon \mathbb{F}_2^q\rightarrow \mathbb{F}_2$ is called monotone
	%% iff
	if for any~$X,Y \in \mathbb{F}_2^q$ with~$X\leq Y$ one has~$g(X)\leq g(Y)$.
	The function~$g$
	%%\colon \mathbb{F}_2^q\rightarrow \mathbb{F}_2$
	is called a simple threshold function if for some fixed~$k\geq 0$
	\begin{equation*}
		\forall X=[x_1,x_2,\ldots, x_q]\in \mathbb{F}_2^q, \quad
		g(X)=	\begin{cases}
			1, & \text{if $\sum_j x_j \geq k$\text{, and}}  \\
			0, & \text{otherwise}.
		\end{cases}
	\end{equation*}
\end{definition}
%%%
%%%%%%%%%%%%%%%%%%%%%%%%%%%%%%%%%%%%%%%%%%%%%%%%%%%%%%%%%%%%%%%%%%%%%%%%%%%%%%%%%%%%
%%%
It is easy to check that the binary functions `AND' and `OR' are simple threshold functions, and
that simple threshold functions are monotone. We will call a (sequential) dynamical system with
monotone, local functions a monotone (sequential) dynamical system.

%%%
%%%%%%%%%%%%%%%%%%%%%%%%%%%%%%%%%%%%%%%%%%%%%%%%%%%%%%%%%%%%%%%%%%%%%%%%%%%%%%%%%%%%
%%%
\section{\GOE States and Fixed Points}\label{sec3}
%%%
%%%%%%%%%%%%%%%%%%%%%%%%%%%%%%%%%%%%%%%%%%%%%%%%%%%%%%%%%%%%%%%%%%%%%%%%%%%%%%%%%%%%
%%%
In this section we analyze \GOE states and fixed points of monotone \SDS. We shall identify a set of
states that are either \GOE states or reach a fixed point, regardless of the particular choice of the
(local) monotone functions.

%%%
%%%%%%%%%%%%%%%%%%%%%%%%%%%%%%%%%%%%%%%%%%%%%%%%%%%%%%%%%%%%%%%%%%%%%%%%%%%%%%%%%%%%
%%%
\begin{lemma}\label{4lem1}
 Let~$(G,f,\pi)$ be a monotone \SDS and~$X,Y$ be two states. Then we have: \\
\phantom\quad (a) $X \leq Y$ implies~$F_{\pi}(X)\leq F_{\pi}(Y)$. \\
\phantom\quad (b) if~$X\leq F_{\pi}(X)$ or~$X\geq F_{\pi}(X)$, then~$X$ reaches a fixed point.\\
\phantom\quad (c) the states contained in a limit cycle form an anti-chain.
\end{lemma}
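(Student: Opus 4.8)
The plan is to prove the three parts in order, since each builds on the previous one.

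For part (a), I would argue by induction on the number of local updates applied. The base case is the identity map, which is trivially monotone. For the inductive step, it suffices to show that each inflated map $F_i$ is monotone on $\mathbb{F}_2^n$; then $F_\pi = F_{\pi_n}\circ\cdots\circ F_{\pi_1}$ is monotone as a composition of monotone maps. To see that $F_i$ is monotone: if $X\leq Y$, then $X$ and $Y$ agree-or-are-dominated coordinatewise, so in particular the states of vertex $i$ and its neighbors in $X$ are $\leq$ those in $Y$; since $f_i$ is monotone, $f_i$ evaluated on the $X$-data is $\leq f_i$ evaluated on the $Y$-data, and all other coordinates are unchanged, so $F_i(X)\leq F_i(Y)$.

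For part (b), suppose $X\leq F_\pi(X)$. Apply the monotone map $F_\pi$ repeatedly and use part (a): from $X\leq F_\pi(X)$ we get $F_\pi(X)\leq F_\pi^2(X)$, and inductively $F_\pi^k(X)\leq F_\pi^{k+1}(X)$ for all $k\geq 0$. Hence the orbit $X\leq F_\pi(X)\leq F_\pi^2(X)\leq\cdots$ is a non-decreasing chain in the finite poset $\mathbb{F}_2^n$, so it must stabilize: there is some $m$ with $F_\pi^m(X)=F_\pi^{m+1}(X)$, i.e.\ $F_\pi^m(X)$ is a fixed point reached from $X$. The case $X\geq F_\pi(X)$ is symmetric, yielding a non-increasing stabilizing chain.

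For part (c), let $X$ lie on a limit cycle, say of period $p$, so $F_\pi^p(X)=X$. If the cycle were not an anti-chain, two of its states would be comparable; applying a suitable power of $F_\pi$ and using part (a), we may assume $X$ and $F_\pi^k(X)$ are comparable for some $1\leq k<p$ with, say, $X\leq F_\pi^k(X)$ (the other direction is symmetric). But then, viewing $F_\pi^k$ as itself a monotone map (part (a) applied to the composition), part (b) applied to $F_\pi^k$ forces $X$ to reach a fixed point of $F_\pi^k$; since $X$ is periodic under $F_\pi$ with period $p$, its orbit under $F_\pi^k$ is already a cycle, so that cycle must be a single fixed point, meaning $F_\pi^k(X)=X$ and the chain collapses — in particular the whole $F_\pi$-orbit of $X$ is comparable-free, i.e.\ an anti-chain. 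I expect the only mild subtlety is bookkeeping in part (c): making precise that "two comparable states on the cycle" can be transported to the pair $X, F_\pi^k(X)$, and that a fixed point of $F_\pi^k$ inside an $F_\pi$-cycle forces the cycle length to divide $k$ in a way that is incompatible with it being a nontrivial cycle unless it degenerates. None of the steps requires heavy machinery; monotonicity plus finiteness of $\mathbb{F}_2^n$ does all the work.
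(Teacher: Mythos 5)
Your proposal is correct and follows essentially the same route as the paper: monotonicity of each inflated map $F_i$ gives (a) by composition, finiteness of $\mathbb{F}_2^n$ stabilizes the monotone orbit chain in (b), and (c) follows by applying (b) to the monotone map $F_\pi^k$ to force any comparable pair on a cycle to coincide. The bookkeeping you flag in (c) resolves exactly as you expect ($F_\pi^k(X)=X$ with $1\leq k<p$ contradicts minimality of the period), so there is no gap.
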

%%%
%%%%%%%%%%%%%%%%%%%%%%%%%%%%%%%%%%%%%%%%%%%%%%%%%%%%%%%%%%%%%%%%%%%%%%%%%%%%%%%%%%%%
%%%

\proof Using the fact that the functions~$F_{\pi_j}$ are monotone, the relation~$F_{\pi_i}\circ
\dots \circ F_{\pi_2}\circ F_{\pi_1}(X)\leq F_{\pi_i}\circ \dots \circ F_{\pi_2}\circ F_{\pi_1}(Y)$
for $1\leq i \leq n$ follows by induction on~$i$, establishing~{(a)}.
To prove~{(b)}, suppose~$X\leq F_{\pi}(X)$ or~$X\geq F_{\pi}(X)$, then~{(a)} gives rise to
the diagram
%$$
%%\diagram
%\xymatrix{
%X \dline \rto F_{\pi}(X)\dline   \rto \ldots \rto F_{\pi}^h(X)\dline
%F_{\pi}(X)\rto F^2_{\pi}(X)       \rto \ldots \rto F_{\pi}^{h+1}(X)
%}
%%\enddiagram
%$$
\begin{equation*}
\xymatrix{
	X \ar[d]\ar[r] & F_{\pi}(X)\ar[d]\ar[r] &\cdots  \ar[r] & F_{\pi}^h(X)\ar[d]\\
	F_{\pi}(X)\ar[r] & F^2_{\pi}(X)        \ar[r] & \cdots  \ar[r] & F_{\pi}^{h+1}(X)
}
\end{equation*}
from which we derive the chain~$X, F_{\pi}(X), F_{\pi}^2(X),\ldots$ in the
poset~$\mathbb{F}_2^n$. Since~$\mathbb{F}_2^n$ is a finite poset, the chain becomes stationary after
a finite number of steps, that is,~$X$ reaches a fixed point.  Note if~$X\leq F^k_{\pi}(X)$ or~$X\geq F^k_{\pi}(X)$, then~$X$ reaches a fixed point w.r.t. iteration of the map $F^k_{\pi}$ according to~{(b)}. Thus, any two different states on a limit cycle of $F_{\pi}$ cannot be comparable, whence Part~{(c)}.  \qed

%%%
%%%%%%%%%%%%%%%%%%%%%%%%%%%%%%%%%%%%%%%%%%%%%%%%%%%%%%%%%%%%%%%%%%%%%%%%%%%%%%%%%%%%
%%%
\begin{proposition}\label{4thm2}
  Let~$(G,f,\pi)$ be a monotone \SDS. Then the following statements hold.\\
  \phantom\quad (a) Suppose that the state~$X$ satisfies $X\leq Z$ or $X\geq Z$ and reaches the
  fixed point~$Z$.  Then any state~$Y$ satisfying~$X\leq Y \leq Z$ or~$X\geq Y\geq Z$ reaches the
  fixed point~$Z$.\\
  \phantom\quad (b) Suppose~$X$ satisfies~$X\leq Z_{\pi}$ or~$X\geq Z_{\pi}$ and reaches~$Z_{\pi}$.
  If~$Z_\pi$ is a fixed point of $(G,f,\pi)$, then for any~$\sigma$-schedule and~$k\geq 0$ we
  have~$F_{\sigma}^k(X)\leq Z_{\pi}$ or~$F_{\sigma}^k(X)\geq Z_{\pi}$.
\end{proposition}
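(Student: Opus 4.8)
The plan is to handle the two parts separately: part~(a) will follow from a monotone ``squeeze'' argument, while part~(b) rests on the fact that a fixed point of an \SDS is independent of the update schedule.

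For part~(a), I would treat the case $X\le Z$ explicitly (the case $X\ge Z$ being symmetric under reversing the order). Given $Y$ with $X\le Y\le Z$, I would apply $F_\pi^k$ to the whole chain and invoke Lemma~\ref{4lem1}(a) $k$ times to obtain $F_\pi^k(X)\le F_\pi^k(Y)\le F_\pi^k(Z)$. Since $Z$ is a fixed point we have $F_\pi^k(Z)=Z$, and since $X$ reaches $Z$ there is some $k_0$ with $F_\pi^{k_0}(X)=Z$. For every $k\ge k_0$ the chain then reads $Z\le F_\pi^k(Y)\le Z$, which forces $F_\pi^k(Y)=Z$; hence $Y$ reaches $Z$. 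The only point requiring care is the reading of ``reaches $Z$'' as meaning that the orbit of $X$ eventually equals $Z$ and then stays there (as $Z$ is fixed), which is exactly what makes the squeeze collapse.

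For part~(b), the key observation I would establish first is that $Z_\pi$, being a fixed point of $(G,f,\pi)$, is in fact fixed by every local inflation, and is therefore a fixed point of $F_\sigma$ for every schedule $\sigma$. To see this, note that in a single sequential sweep each vertex is updated exactly once; if the first updated vertex $\pi_1$ were sent to a value different from its entry in $Z_\pi$, that altered value would persist to the end of the sweep (vertex $\pi_1$ is touched only once), contradicting $F_\pi(Z_\pi)=Z_\pi$. Thus $f_{\pi_1}(Z_\pi)=(Z_\pi)_{\pi_1}$, the state is unchanged after the first update, and proceeding inductively along $\pi$ one concludes $f_i(Z_\pi)=(Z_\pi)_i$ for all $i$. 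Consequently $F_\sigma(Z_\pi)=Z_\pi$ for every $\sigma$.

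With this in hand part~(b) is immediate from monotonicity. Since $(G,f,\sigma)$ is again a monotone \SDS, Lemma~\ref{4lem1}(a) applies to $F_\sigma$: taking the case $X\le Z_\pi$ gives $F_\sigma(X)\le F_\sigma(Z_\pi)=Z_\pi$, and induction on $k$ yields $F_\sigma^k(X)\le Z_\pi$ for all $k\ge 0$; the case $X\ge Z_\pi$ gives $F_\sigma^k(X)\ge Z_\pi$ symmetrically. In either case the iterates stay comparable to $Z_\pi$, which is the claim. I expect the main obstacle to be precisely the schedule-independence of fixed points: everything else is a short monotonicity-plus-induction argument, whereas that step is where the combinatorial structure of the sequential update (each vertex updated exactly once per sweep) must be used carefully.
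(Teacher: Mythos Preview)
Your argument is correct and follows essentially the same route as the paper: the same monotone squeeze for~(a), and the same observation that a fixed point of $(G,f,\pi)$ is a fixed point under any schedule, followed by monotonicity, for~(b). The only difference is that you supply a full justification of the schedule-independence of fixed points (via the ``each vertex is updated once'' argument), whereas the paper simply asserts this fact; this extra care is welcome but not a different approach.
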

%%%
%%%%%%%%%%%%%%%%%%%%%%%%%%%%%%%%%%%%%%%%%%%%%%%%%%%%%%%%%%%%%%%%%%%%%%%%%%%%%%%%%%%%
%%%
\proof
To prove~(a) we may without loss of generality assume that~$X\leq Z$.  Lemma~\ref{4lem1} guarantees
that~$X\leq Y \leq Z$ implies that, for any $k\geq 0$, $F_{\pi}^k(X)\leq F_{\pi}^k(Y)\leq
F_{\pi}^k(Z)=Z$, where $F_{\pi}^0=\text{\rm id}$.  Since~$X$ reaches~$Z$, there exists~$k_0$ such
that~$F_{\pi}^k(X)=Z$ for any $k\geq k_0$. Consequently,~$Y$ reaches~$Z$ and~{(a)} follows.
As for~{(b)}, we note, that if~$Z_{\pi}$ is a fixed point for~$(G,f,\pi)$, then~$Z_{\pi}$ is a fixed
point for~$(G,f,\sigma)$ for any~$\sigma$, and we therefore have~$ F_{\sigma}^k(X)\leq
F_{\sigma}^k(Z_{\pi})=Z_{\pi}$, completing the proof.  \qed

The paper~\cite{goe1} considers the states~${\bf 0}$ and~${\bf 1}$ in order to probe for fixed
points in monotone systems and in addition discusses the existence of \GOE. From our previous
observations we can immediately conclude:
%%%
%%%%%%%%%%%%%%%%%%%%%%%%%%%%%%%%%%%%%%%%%%%%%%%%%%%%%%%%%%%%%%%%%%%%%%%%%%%%%%%%%%%%%%%%%%%%%%%%%%%%%%%%
%%%
\begin{proposition}\label{observation}
  Let $(G,f,\pi)$ be a monotone \SDS. Then ${\bf 0}$ and ${\bf 1}$ are either a \GOE state reaching
  a fixed point or a fixed point.
\end{proposition}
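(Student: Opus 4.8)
The plan is to apply Lemma~\ref{4lem1}(b) directly to the states~${\bf 0}$ and~${\bf 1}$. First I would observe that~${\bf 0}$ is the minimum element of the poset~$\mathbb{F}_2^n$, so trivially~${\bf 0}\leq F_{\pi}({\bf 0})$; likewise~${\bf 1}$ is the maximum element, so~${\bf 1}\geq F_{\pi}({\bf 1})$. In either case the hypothesis of Lemma~\ref{4lem1}(b) is satisfied, and we conclude that both~${\bf 0}$ and~${\bf 1}$ reach a fixed point.

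It then remains only to sort the resulting behaviour into the two cases in the statement. If~${\bf 0}$ (resp.~${\bf 1}$) has an~$F_{\pi}$-preimage, it is a periodic state; being comparable to the fixed point it reaches, Lemma~\ref{4lem1}(c) forces its limit cycle to be a single state, i.e.~${\bf 0}$ (resp.~${\bf 1}$) is itself a fixed point. Otherwise it has no preimage and is a \GOE state, and by the previous paragraph it reaches a fixed point. This exhausts the possibilities and gives the dichotomy ``\GOE state reaching a fixed point, or a fixed point.''

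I do not anticipate a genuine obstacle here: the proposition is essentially an immediate corollary of Lemma~\ref{4lem1} once one notes that~${\bf 0}$ and~${\bf 1}$ are the extreme elements of~$\mathbb{F}_2^n$. The only point requiring a moment's care is the bookkeeping in the second paragraph—ruling out the possibility that~${\bf 0}$ or~${\bf 1}$ lies on a non-trivial limit cycle—but this is handled cleanly by Lemma~\ref{4lem1}(c), since every point on a limit cycle is comparable only to itself within that cycle, whereas~${\bf 0}$ and~${\bf 1}$ are comparable to \emph{every} state.
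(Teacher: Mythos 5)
Your first paragraph is fine and agrees with the paper: since~${\bf 0}$ is the minimum of~$\mathbb{F}_2^n$ we have~${\bf 0}\leq F_{\pi}({\bf 0})$, so Lemma~\ref{4lem1}(b) gives that~${\bf 0}$ reaches a fixed point, and dually for~${\bf 1}$. The gap is the sentence ``if~${\bf 0}$ has an~$F_{\pi}$-preimage, it is a periodic state.'' In a finite dynamical system, having a preimage does not imply periodicity: a non-\GOE transient state has a preimage but lies on no limit cycle (schematically, $a\mapsto b\mapsto c\mapsto c$, where~$b$ has a preimage yet is not periodic). So your case analysis is not exhaustive as written --- you have not excluded the possibility that~${\bf 0}$ is a non-\GOE \emph{transient} state, which is precisely the case the proposition needs to rule out. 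The rest of your second paragraph (periodic plus comparable to everything forces a trivial cycle, via Lemma~\ref{4lem1}(c)) is correct, but it only applies once periodicity is actually established, and here it is asserted rather than proved.

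The missing step is the one-line monotonicity squeeze that the paper's proof supplies. If~$F_{\pi}(X)={\bf 0}$ for some state~$X$, then~$X\geq {\bf 0}$, so Lemma~\ref{4lem1}(a) gives~${\bf 0}=F_{\pi}(X)\geq F_{\pi}({\bf 0})$; combined with the trivial inequality~$F_{\pi}({\bf 0})\geq {\bf 0}$ this forces~$F_{\pi}({\bf 0})={\bf 0}$, i.e.\ a non-\GOE ${\bf 0}$ is already a fixed point (and dually, a non-\GOE ${\bf 1}$ is a fixed point). Substituting this argument for the false implication ``preimage~$\Rightarrow$ periodic'' makes your proof complete, and it then coincides with the paper's.
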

%%%
%%%%%%%%%%%%%%%%%%%%%%%%%%%%%%%%%%%%%%%%%%%%%%%%%%%%%%%%%%%%%%%%%%%%%%%%%%%%%%%%%%%%%%%%%%%%%%%%%%%%%%%%
%%%
\proof For any update schedule~$\pi$ it is clear that~$F_{\pi} ({\bf 0})\geq {\bf
  0}$. Lemma~\ref{4lem1} guarantees that if~${\bf 0}$ is a \GOE state, it will reach a fixed point.
Otherwise, there exists~$X> {\bf 0}$ and~$F_{\pi}(X)={\bf 0}$. Again, by Lemma~\ref{4lem1},~${\bf
  0}$ must be a fixed point since the formed decreasing chain $X>0\geq \cdots$ must become
stationary at~${\bf 0}$.  The argument for the case of~${\bf 1}$ is analogous and the proposition
follows.\qed

For a finite system, all fixed points form a sub-poset of the poset~$\mathbb{F}_2^n$.
In view of Proposition~\ref{4thm2} and Proposition~\ref{observation}, we obtain the following variation
of the Knaster-Tarski theorem~\cite{knaster, tarski}: the set of fixed points of a monotone function on
a complete lattice is a complete lattice. To this end, it suffices to establish the existence of a unique
maximal and minimal element (by definition of complete lattice).
%%%
%%%%%%%%%%%%%%%%%%%%%%%%%%%%%%%%%%%%%%%%%%%%%%%%%%%%%%%%%%%%%%%%%%%%%%%%%%%%%%%%%%%%%%%%%%%%
%%%
\begin{proposition}\label{prop:knaster-tarski}\hskip5pt\phantom{Trick}\\
\phantom\quad (a) The set of fixed points of a monotone \SDS $(G,f,\pi)$ is a complete
lattice~$\mathbb{L}_\pi$.\\
\phantom\quad (b) In~$\mathbb{F}_2^n$, any periodic point~$X$ is comparable to
the~$\mathbb{L}_\pi$-maximum (MAX) and~$\mathbb{L}_\pi$-minimum (MIN), and we have~$\text{\rm
  MIN}\le X\le \text{\rm MAX}$.
\end{proposition}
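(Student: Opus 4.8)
The plan is to prove part (a) first, then derive part (b) as a consequence. For (a), I will show that the set $\mathbb{L}_\pi$ of fixed points of $F_\pi$, with the induced order from $\mathbb{F}_2^n$, has a greatest and a least element; since $\mathbb{F}_2^n$ is a finite lattice and any finite poset with a top and bottom that is closed under... — more precisely, I will use the standard fact that to show a finite poset is a complete lattice it suffices to show it has a top and a bottom element and that every pair has a meet and join. So the real work is twofold: (i) exhibit $\mathrm{MAX}$ and $\mathrm{MIN}$ in $\mathbb{L}_\pi$, and (ii) show $\mathbb{L}_\pi$ is closed under the meet and join operations it inherits from $\mathbb{F}_2^n$ (or at least that meets and joins exist inside $\mathbb{L}_\pi$, not necessarily agreeing with those of the ambient lattice).

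For the existence of $\mathrm{MAX}$: by Proposition~\ref{observation}, ${\bf 1}$ either is a fixed point or is a \GOE state that reaches a fixed point; in either case there is at least one fixed point $Z$ with, I claim, the property that every fixed point lies below $Z$. Here is the key step. Start iterating $F_\pi$ from ${\bf 1}$. Since $F_\pi({\bf 1}) \le {\bf 1}$, Lemma~\ref{4lem1}(b) gives a decreasing chain ${\bf 1} \ge F_\pi({\bf 1}) \ge F_\pi^2({\bf 1}) \ge \cdots$ stabilizing at a fixed point $Z^+$. Now for any fixed point $W$ we have $W \le {\bf 1}$, hence by Lemma~\ref{4lem1}(a), $W = F_\pi^k(W) \le F_\pi^k({\bf 1})$ for all $k$, and taking $k$ large, $W \le Z^+$. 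So $Z^+ = \mathrm{MAX}$. Dually, iterating from ${\bf 0}$ yields an increasing chain stabilizing at a fixed point $Z^-$ with $Z^- \le W$ for every fixed point $W$, so $Z^- = \mathrm{MIN}$. For the meet/join structure: given fixed points $U, V$, I want to produce their join in $\mathbb{L}_\pi$. The ambient join $U \vee V$ (coordinatewise OR) satisfies $U, V \le U \vee V$, hence $F_\pi(U \vee V) \ge F_\pi(U) = U$ and likewise $\ge V$, so $F_\pi(U \vee V) \ge U \vee V$; then Lemma~\ref{4lem1}(b) says iteration from $U \vee V$ climbs to a fixed point $Z$, and any fixed point above both $U$ and $V$ is above $U \vee V$ hence (by the same argument as for $\mathrm{MAX}$) above $Z$ — so $Z$ is the least fixed point above $U$ and $V$, i.e.\ the join in $\mathbb{L}_\pi$. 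The meet is dual, iterating downward from $U \wedge V$.

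Part (b) is then short. Let $X$ be any periodic point. Since $X \le {\bf 1}$, Lemma~\ref{4lem1}(a) gives $F_\pi^k(X) \le F_\pi^k({\bf 1})$ for all $k$; taking $k$ large makes the right side equal $\mathrm{MAX}$, while on the left $F_\pi^k(X)$ ranges over the limit cycle of $X$, so in particular $X \le \mathrm{MAX}$ after choosing $k$ a multiple of the cycle length. Dually $\mathrm{MIN} \le X$. This gives $\mathrm{MIN} \le X \le \mathrm{MAX}$ and in particular comparability.

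The main obstacle I anticipate is not any single deep step but making precise the ``finite poset with top, bottom, and all binary meets/joins is a complete lattice'' reduction and verifying that the meet/join I construct inside $\mathbb{L}_\pi$ genuinely are the poset meet/join there — one must be careful that they need \emph{not} coincide with the ambient operations $\vee, \wedge$ of $\mathbb{F}_2^n$, only dominate/be-dominated appropriately. The argument that ``the fixed point reached by climbing from $U \vee V$ is the \emph{least} fixed point above $U$ and $V$'' reuses the $\mathrm{MAX}$ argument and is the crux; everything else is a direct appeal to Lemma~\ref{4lem1} and Proposition~\ref{observation}.
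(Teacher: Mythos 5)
Your proposal is correct, and on part (a) it is actually more complete than the paper's own argument. The overlap: both proofs locate $\mathrm{MAX}$ and $\mathrm{MIN}$ by iterating $F_\pi$ from $\mathbf{1}$ and $\mathbf{0}$ (using $F_\pi(\mathbf{1})\le\mathbf{1}$, $F_\pi(\mathbf{0})\ge\mathbf{0}$ and Lemma~\ref{4lem1}), and part (b) is word-for-word the same sandwich $F_\pi^{mk}(\mathbf{0})\le F_\pi^{mk}(X)=X\le F_\pi^{mk}(\mathbf{1})$. The differences are two. First, where the paper proves uniqueness of the maximal fixed point by contradiction (listing the maximal elements $Z_1,\dots,Z_p$ and showing two distinct ones cannot coexist), you argue directly that every fixed point $W$ satisfies $W=F_\pi^k(W)\le F_\pi^k(\mathbf{1})\to Z^+$; this is shorter and cleaner. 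Second, and more substantively, the paper stops after exhibiting a top and a bottom, asserting that this ``suffices by definition of complete lattice.'' That assertion is not literally true for an arbitrary finite poset (a bounded finite poset need not be a lattice), so your additional step --- showing that for fixed points $U,V$ the iteration upward from the ambient join $U\vee V$ stabilizes at the \emph{least} fixed point above both, and dually for meets --- is exactly what is needed to finish (a) rigorously; together with the bounds this gives all finite, hence all, meets and joins in the finite poset $\mathbb{L}_\pi$. Your join argument is sound: $F_\pi(U\vee V)\ge U\vee V$ starts an increasing chain to a fixed point $Z$, and any fixed point $W\ge U,V$ satisfies $W\ge U\vee V$, hence $W=F_\pi^k(W)\ge F_\pi^k(U\vee V)=Z$ for large $k$. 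In short, your route buys a genuinely airtight proof of the lattice structure at the cost of one extra (easy) construction, while the paper's route is briefer but leans on an unproved reduction.
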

%%%
%%%%%%%%%%%%%%%%%%%%%%%%%%%%%%%%%%%%%%%%%%%%%%%%%%%%%%%%%%%%%%%%%%%%%%%%%%%%%%%%%%%%%%%%%%%%
%%%
\proof If~${\bf 1}$ is a fixed point, then it is certainly the unique maximum. Otherwise, suppose
that in the poset of fixed points the maximal elements are $Z_1,\ldots, Z_p$ where $p>1$. Since the
element~${\bf 1}$ is not a fixed point, Proposition~\ref{observation} guarantees that~${\bf 1}$
reaches a fixed point.

{\it Claim~$1$.} The fixed point reached by~${\bf 1}$ is maximal. \\ Suppose~${\bf 1}$ reaches the
fixed point $Z_0$.  If $Z_0$ is not comparable with any $Z_i$ for $i\neq 0$, then $Z_0$ is contained
in the set of maximal elements.  Otherwise we have~$Z_0<Z_i< {\bf 1}$ for some~$1\leq i \leq
p$. From Proposition~\ref{4thm2},~$Z_i$ reaches the fixed point $Z_0$ which implies that~$Z_i$
itself is not a fixed point, a contradiction, proving the claim.

Suppose~${\bf 1}$ reaches the fixed point~$Z_1$. Since~${\bf 1}>Z_i$ ($1< i \leq p$), monotonicity
implies~$F_{\pi}^k({\bf 1})\geq F_{\pi}^k(Z_i)=Z_i$ for $k\geq 1$. As~${\bf 1}$ reaches the fixed
point~$Z_1$, we arrive at~$Z_1\geq Z_i$, which is impossible since~$Z_1$ and~$Z_i$ are distinct,
maximal elements. Accordingly,~$Z_1$ is the unique maximal element.
The argument in case of~${\bf 0}$ is completely analogous.

Next, suppose that~$X$ is a periodic point. Then~$F_{\pi}^{mk}(X)=X$ for some~$k>0$ and any~$m>0$.
Since~${\bf 0}\leq X \leq {\bf 1}$, Lemma~\ref{4lem1} implies that~$F_{\pi}^l({\bf 0})\leq
F_{\pi}^l(X)\leq F_{\pi}^l({\bf 1})$ for any~$l\geq 0$. Since~${\bf 0}$ and~${\bf 1}$ reach MIN and
MAX as fixed points, we have, for a sufficiently large $m>0$
\begin{equation*}
{\rm MIN} = F_{\pi}^{mk}({\bf 0})\leq F_{\pi}^{mk}(X)=X \leq F_{\pi}^{mk}({\bf 1})={\rm MAX} \;,
\end{equation*}
completing the proof. \qed

\begin{remark}
  Proposition~\ref{prop:knaster-tarski} (b) appears to have not been addressed before and does not
  hold in the general case of systems with infinite phase space. The above proposition also implies
  that the phase space of a monotone system has specific properties that do not depend on the
  particular choice of local functions: if there exists a non-trivial periodic point, then there
  exist at least two fixed points.
\end{remark}
We denote the symmetric group on the set~$[n]=\{1,2,\ldots, n\}$ by~$\mathbb{S}_n$. Let $g\in
\mathbb{S}_n$ and $X=[x_1,x_2,\ldots, x_n]\in \mathbb{F}_2^n$. The group~$\mathbb{S}_n$ acts
on~$\mathbb{F}_2^n$ by
\begin{equation*}
  g \cdot X=[x_{g^{-1}(1)}, x_{g^{-1}(2)}, \ldots, x_{g^{-1}(n)}] \;.
\end{equation*}
Moreover, $\mathbb{S}_n$ acts on the set of update schedules via $g\cdot \pi=
\pi_{g^{-1}(1)}\pi_{g^{-1}(2)}\cdots \pi_{g^{-1}(n)}$.

Two \SDS phase spaces are called cycle equivalent~\cite{macmor} iff there exists an isomorphism
between their sets of limit cycles. Let~$\tau=(n \bai n-1 \bai \cdots \bai 1)$
be a cyclic permutation. For $\pi=\pi_1\pi_2\cdots \pi_n$ we set
\begin{equation*}
  \pi_{\tau^k}=\tau^k \cdot \pi=\pi_{k+1}\cdots \pi_n\pi_1\pi_2\cdots \pi_k \;.
\end{equation*}
There is a relation between the phase space of the~$\pi$-system~$F_{\pi}=F_{\pi_n}\circ
F_{\pi_{n-1}}\circ \cdots \circ F_{\pi_1}$ and
the~$\pi_{\tau^k}$-system~$F_{\pi_{\tau^k}}=F_{\pi_k}\circ F_{\pi_{k-1}} \circ \cdots \circ
F_{\pi_{k+1}}$:
%%%
%%%%%%%%%%%%%%%%%%%%%%%%%%%%%%%%%%%%%%%%%%%%%%%%%%%%%%%%%%%%%%%%%%%%%%%%%%%%%%%%%%%%%%%%%%%%%%%%%%%%%%%%%%%%%
%%%
\begin{proposition}\label{p-hom} The map
  $h=F_{\pi_k}\circ F_{\pi_{k-1}}\circ \dots \circ F_{\pi_1}$ is a homomorphism from the phase space
  of~$(G,f,\pi)$ to the phase space of~$(G,f,\pi_{\tau^k})$. Furthermore, restricted to the limit
  cycles~$h$ induces an isomorphism.
\end{proposition}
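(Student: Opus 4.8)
The plan is to prove an intertwining identity $h\circ F_\pi = F_{\pi_{\tau^k}}\circ h$, from which the homomorphism claim is immediate, and then to produce an explicit cycle-inverse for $h$ on the level of limit cycles.

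\emph{Step 1: the homomorphism.} I would abbreviate $A := F_{\pi_k}\circ F_{\pi_{k-1}}\circ\cdots\circ F_{\pi_1}$ (so that $h = A$) and $B := F_{\pi_n}\circ F_{\pi_{n-1}}\circ\cdots\circ F_{\pi_{k+1}}$. Reading off the two update schedules gives $F_\pi = B\circ A$ while $F_{\pi_{\tau^k}} = A\circ B$ (first the block $\pi_{k+1}\cdots\pi_n$, then the block $\pi_1\cdots\pi_k$). Hence $h\circ F_\pi = A\circ B\circ A = F_{\pi_{\tau^k}}\circ A = F_{\pi_{\tau^k}}\circ h$. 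This identity says exactly that $h$ sends the edge $\bigl(X,F_\pi(X)\bigr)$ of the $\pi$-phase space to the edge $\bigl(h(X),F_{\pi_{\tau^k}}(h(X))\bigr)$ of the $\pi_{\tau^k}$-phase space, so $h$ is a phase-space homomorphism. Iterating the identity yields $h\circ F_\pi^m = F_{\pi_{\tau^k}}^m\circ h$ for all $m\ge 0$; in particular a periodic point of $F_\pi$ is sent to a periodic point of $F_{\pi_{\tau^k}}$, so $h$ maps limit cycles into limit cycles.

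\emph{Step 2: the inverse on limit cycles.} The schedule $\pi$ is recovered from $\pi_{\tau^k}$ by the further cyclic shift $\tau^{n-k}$, and $B$ is precisely the corresponding partial composition, so by the same computation $B$ is a phase-space homomorphism in the reverse direction, and moreover $B\circ h = F_\pi$ and $h\circ B = F_{\pi_{\tau^k}}$. Let $C$ (resp.\ $C'$) be the set of periodic points of $F_\pi$ (resp.\ $F_{\pi_{\tau^k}}$). On a finite phase space $F_\pi$ restricts to a permutation of $C$ and $F_{\pi_{\tau^k}}$ to a permutation of $C'$; since $B\circ h|_{C} = F_\pi|_{C}$ is a bijection of $C$ and $h\circ B|_{C'} = F_{\pi_{\tau^k}}|_{C'}$ is a bijection of $C'$, the map $h|_{C}\colon C\to C'$ is both left- and right-invertible, hence a bijection. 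A bijection intertwining the two dynamics has an intertwining inverse ($F_\pi\circ h^{-1} = h^{-1}\circ F_{\pi_{\tau^k}}$ on $C'$), so $h$ carries each limit cycle of $(G,f,\pi)$ bijectively onto a limit cycle of $(G,f,\pi_{\tau^k})$ of equal length, and the induced correspondence between the two sets of limit cycles is a bijection --- that is, an isomorphism.

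The one point requiring care is the surjectivity in Step 2: it does not follow from $h$ being a homomorphism alone, and it genuinely uses the complementary map $B$ together with the fact that both $F_\pi$ and $F_{\pi_{\tau^k}}$ act as permutations on their respective periodic sets. Note that monotonicity plays no role here --- the statement holds for arbitrary local functions $f$ --- and everything beyond the displayed composition identity is routine bookkeeping with the maps $F_{\pi_i}$.
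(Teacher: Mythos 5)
Your proof is correct and follows essentially the same route as the paper: the intertwining identity $h\circ F_\pi = F_{\pi_{\tau^k}}\circ h$, obtained from the factorizations $F_\pi = B\circ A$ and $F_{\pi_{\tau^k}} = A\circ B$, is exactly the paper's Claim~1 (stated there for $k=1$ after a reduction to that case). Your Step~2 is a cleaner packaging of the paper's closing argument --- where the paper checks that $h$ preserves distinctness along directed paths and then composes the shifts around the full cycle ($\tau$ followed by $\tau^{n-1}$) to see that the number of limit cycles is preserved, you use the explicit two-sided relations $B\circ h = F_\pi$ and $h\circ B = F_{\pi_{\tau^k}}$ on the sets of periodic points, which is the same idea made precise.
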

%%%
%%%%%%%%%%%%%%%%%%%%%%%%%%%%%%%%%%%%%%%%%%%%%%%%%%%%%%%%%%%%%%%%%%%%%%%%%%%%%%%%%%%%%%%%%%%%%%%%%%%%%%%%%%%%%
%%%
\proof It is sufficient to consider the case $k=1$, and we shall prove that $h=F_{\pi_1}$.

{\em Claim~$1$}. If $F_{\pi}(X)=Y$, then $F_{\pi_{\tau}}\big(F_{\pi_1}(X)\big)=F_{\pi_1}(Y)$.\\
We compute
\begin{align*}
  F_{\pi_{\tau}}\big(F_{\pi_1}(X)\big)&=F_{\pi_1}\circ F_{\pi_n}\circ F_{\pi_{n-1}} \circ \dots \circ F_{\pi_2}
  \big(F_{\pi_1}(X)\big)\\
&=F_{\pi_1}\big(F_{\pi_n}\circ F_{\pi_{n-1}} \circ \dots \circ F_{\pi_2}\circ F_{\pi_1}(X)\big)=F_{\pi_1}(Y).
\end{align*}
Thus, $h=F_{\pi_1}$ is a homomorphism as it maps the directed edge $X\rightarrow Y$ into the directed edge
$F_{\pi_1}(X)\rightarrow F_{\pi_1}(Y)$. It remains to prove that $h$ induces an isomorphism on limit cycles.
To prove this it is crucial that $F_{\pi}(X)\neq F_{\pi}(Y)$ implies $F_{\pi_1}(X) \neq F_{\pi_1}(Y)$. We conclude
from this: suppose for a sequence $(X_1,X_2,\dots, X_k)$, we have $F_{\pi}(X_i)=X_{i+1}$ for $1\leq i \leq k-1$
and $X_i\neq X_{j}$ for any $2\leq i<j \leq k$, then for the sequence
$$
\big(F_{\pi_1}(X_1),F_{\pi_1}(X_2),\dots, F_{\pi_1}(X_k)\big),
$$
we have $F_{\pi_1}(X_i)\neq F_{\pi_1}(X_{j}) $ for $2\leq i<j \leq k$ and
$F_{\pi_{\tau}}\big(F_{\pi_1}(X_i)\big)=F_{\pi_1}(X_{i+1})$ for $1\leq i\leq k-1$. Clearly, $X_k=X_1$ implies
$F_{\pi_1}(X_1)=F_{\pi_1}(X_k)$. As a result, $h$ preserves both: directed paths and limit cycles. Thus, each limit cycle of $(G,f,\pi)$ has a unique isomorphic copy under $(G,f,\pi_{\tau})$. Let $Cyc(F_{\pi})$ denote the set consisting of limit cycles of $F_{\pi}$. Note that $\pi=[\pi_{\tau}]_{\tau^{n-1}}$. Then we have the following diagram
\begin{equation*}
\xymatrix{
	& \pi \ar[d]  \ar[r]^{\tau}  & {\pi_{\tau}} \ar[d]\ar[r]^{\tau^{n-1}}  & \pi \ar[d]\\
	& Cyc(F_{\pi})\ar[d]  \ar[r]^{h_{\tau}}  & Cyc(F_{\pi_{\tau}})\ar[r]^{h_{\tau^{n-1}}}  & Cyc(F_{\pi}) \ar[d]\\
	& Cyc(F_{\pi})\ar@{-}[r]&^{F_{\pi}} \ar[r]   & Cyc(F_{\pi})
}
\end{equation*}
which implies that 
$(G,f,\pi)$ and $(G,f,\pi_{\tau})$ have the same number of limit cycles.
In particular, restricted to the limit cycles, $h$ induces an isomorphism.
\qed

As an immediate application of Proposition~\ref{p-hom}, we can recover the cycle equivalence result
of \SDS in~\cite{macmor} which was proved differently there.

%%%
%%%%%%%%%%%%%%%%%%%%%%%%%%%%%%%%%%%%%%%%%%%%%%%%%%%%%%%%%%%%%%%%%%%%%%%%%%%%%%%%%%%%%%%%%%%%%
%%%
\begin{corollary}\cite{macmor}
  The \SDS~$(G,f,\pi)$ and~$(G,f,\pi_{\tau^k})$ are cycle-equivalent.
\end{corollary}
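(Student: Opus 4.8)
The plan is to read the corollary off directly from Proposition~\ref{p-hom}, whose second sentence is, almost verbatim, the assertion we need. Recall that two \SDS phase spaces are by definition cycle-equivalent exactly when there exists an isomorphism between their sets of limit cycles; hence it suffices to produce such an isomorphism between $Cyc(F_\pi)$ and $Cyc(F_{\pi_{\tau^k}})$.

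First I would dispose of the case $k=1$. Proposition~\ref{p-hom} provides the phase-space homomorphism $h=F_{\pi_1}\colon (G,f,\pi)\to (G,f,\pi_\tau)$ which, restricted to the periodic states, is an isomorphism: it sends each directed edge $X\to F_\pi(X)$ of a limit cycle of $F_\pi$ to the directed edge $F_{\pi_1}(X)\to F_{\pi_1}\!\big(F_\pi(X)\big)$ of a limit cycle of $F_{\pi_\tau}$, it is injective on the states lying on limit cycles, and it is surjective onto $Cyc(F_{\pi_\tau})$ (the counting argument in the proof of Proposition~\ref{p-hom}). In particular $h$ is a bijection $Cyc(F_\pi)\to Cyc(F_{\pi_\tau})$ that preserves the directed-cycle structure, so it is an isomorphism of the two sets of limit cycles and $(G,f,\pi)$, $(G,f,\pi_\tau)$ are cycle-equivalent.

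For general $k$ I would iterate the single-shift step. Setting $\pi^{(0)}=\pi$ and $\pi^{(j+1)}=(\pi^{(j)})_\tau$, one has $\pi^{(k)}=\pi_{\tau^k}$, since $\tau^k\cdot\pi$ is obtained by applying the cyclic shift $\tau$ to $\pi$ exactly $k$ times. Applying the $k=1$ case to each consecutive pair $(G,f,\pi^{(j)})$, $(G,f,\pi^{(j+1)})$ yields isomorphisms $Cyc(F_{\pi^{(j)}})\to Cyc(F_{\pi^{(j+1)}})$ for $0\le j\le k-1$; their composition is the required isomorphism $Cyc(F_\pi)\to Cyc(F_{\pi_{\tau^k}})$. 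Alternatively one may simply invoke Proposition~\ref{p-hom}, which already states that the homomorphism $h=F_{\pi_k}\circ\cdots\circ F_{\pi_1}$ restricts to a limit-cycle isomorphism for arbitrary $k$.

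Since the corollary is essentially a reformulation of Proposition~\ref{p-hom}, no substantive obstacle is expected. The only point deserving a moment's care is matching the term ``isomorphism between sets of limit cycles'' in the definition of cycle equivalence to what $h$ actually delivers---namely a bijection of limit cycles that respects the directed edges, and hence the cycle lengths---which follows from $h$ being a phase-space homomorphism together with its injectivity on periodic states.
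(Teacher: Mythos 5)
Your proposal is correct and matches the paper, which derives this corollary as an immediate consequence of Proposition~\ref{p-hom}: the map $h=F_{\pi_k}\circ\cdots\circ F_{\pi_1}$ restricted to limit cycles is exactly the isomorphism required by the definition of cycle equivalence. The extra iteration-over-$k$ argument you sketch is harmless but unnecessary, since the proposition already covers arbitrary $k$.
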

%%%
%%%%%%%%%%%%%%%%%%%%%%%%%%%%%%%%%%%%%%%%%%%%%%%%%%%%%%%%%%%%%%%%%%%%%%%%%%%%%%%%%%%%%%%%%%%%%
%%%
Let~$\pi=\pi_1\pi_2\cdots \pi_n$ and~$\pi'=\pi'_1 \pi'_2 \cdots \pi'_n$ be two update
schedules. Let~$\pi\sim_{\alpha} \pi'$ if there exists some $k$ such that (i)~$\pi_j=\pi'_j$,~$j\neq
k, k+1$, and (ii) the vertices $\pi_k$ and $\pi_{k+1}$ are not adjacent in $G$. The transitive and
reflexive closure of~$\sim_{\alpha}$ gives an equivalence relation on the set of all update
schedules~\cite{rei1}.  We denote the equivalence class of~$\pi$ by~$[\pi]_{\alpha}$, and for any
update schedules~$\pi$ and~$\pi'$ for which~$\pi\sim_{\alpha} \pi'$, we have~$F_{\pi}=F_{\pi'}$ by
construction.

For $X=[x_1,x_2,\ldots, x_n]$ and~$\pi=\pi_1\pi_2\cdots \pi_n$, let
\begin{equation*}
	[X]_{\pi}=\{\sigma \cdot X \mid \sigma\in [\pi]_{\alpha}\} \;,
\end{equation*}
where we consider update schedules as permutations using one-line representation.
Let~$S_{0,k}=[0,0,\dots,0,1,1,\dots, 1]\in \mathbb{F}_2^n$ where~$x_i=0$ for~$i\le k$ and~$x_j=1$
otherwise, and define~$S_{1,k}=[1,1,\dots,1,0,0,\dots, 0]$ analogously.
%%%
%%%%%%%%%%%%%%%%%%%%%%%%%%%%%%%%%%%%%%%%%%%%%%%%%%%%%%%%%%%%%%%%%%%%%%%%%%%%%%%%%%%%%%%%%%%%%%%
%%%
\begin{theorem}\label{t-main}
	Let~$(G,f,\pi)$ be a monotone \SDS with update schedule~$\pi$.
	Then any state~$X\in [S_{0,k}]_{\pi}\bigcup [S_{1,k}]_{\pi}$ for some~$k$ with~$1\leq k \leq n$ is either a \GOE state or reaches a
	fixed point.  
\end{theorem}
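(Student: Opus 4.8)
The plan is to reduce everything to Lemma~\ref{4lem1}(b): I will show that a state $X\in[S_{0,k}]_\pi$ which is \emph{not} a \GOE state satisfies $X\le F_\pi(X)$, and dually that a non-\GOE state $X\in[S_{1,k}]_\pi$ satisfies $X\ge F_\pi(X)$; in either case $X$ then reaches a fixed point. The first move is to pass to a convenient representative of the update schedule. Since $X\in[S_{0,k}]_\pi$, there is $\sigma\in[\pi]_\alpha$ with $X=\sigma\cdot S_{0,k}$, and then $F_\pi=F_\sigma$. Writing $A=\{\sigma_1,\dots,\sigma_k\}$ and $B=\{\sigma_{k+1},\dots,\sigma_n\}$, the state $X$ is exactly ${\bf 0}$ on the coordinates in $A$ and ${\bf 1}$ on those in $B$. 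Now split the sweep defining $F_\sigma$ into its two blocks, $F_\sigma=F_B\circ F_A$, where $F_A=F_{\sigma_k}\circ\cdots\circ F_{\sigma_1}$ and $F_B=F_{\sigma_n}\circ\cdots\circ F_{\sigma_{k+1}}$. Both $F_A$ and $F_B$ are monotone (the same induction as in Lemma~\ref{4lem1}(a)), and the structural point that drives the argument is that $F_A$ changes only coordinates indexed by $A$ while $F_B$ changes only coordinates indexed by $B$.

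From this and the shape of $X$ one reads off two inequalities for free: since $F_A$ leaves the $B$-coordinates of $X$ (all equal to ${\bf 1}$) unchanged and the $A$-coordinates of $X$ are already all ${\bf 0}$, we get $F_A(X)\ge X$; symmetrically $F_B(X)\le X$. Now suppose $X$ is not a \GOE state, so $F_\sigma(Y)=F_B\bigl(F_A(Y)\bigr)=X$ for some $Y$, and put $Y'=F_A(Y)$. Because $F_B$ leaves the $A$-block untouched, $Y'$ agrees with $X$ on $A$, i.e.\ $Y'|_A={\bf 0}$; because $F_A$ leaves the $B$-block untouched, $Y'|_B=Y|_B\le{\bf 1}$; hence $Y'\le X$. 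Applying the monotone map $F_B$ gives $X=F_B(Y')\le F_B(X)$, and together with $F_B(X)\le X$ this forces $F_B(X)=X$. Therefore $F_\pi(X)=F_\sigma(X)=F_B\bigl(F_A(X)\bigr)\ge F_B(X)=X$, and Lemma~\ref{4lem1}(b) applies. The case $X\in[S_{1,k}]_\pi$ is the order-dual, obtained by interchanging ${\bf 0}\leftrightarrow{\bf 1}$ and $\le\,\leftrightarrow\,\ge$ throughout; there one gets $F_B(X)=X$ and $F_\pi(X)\le X$.

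The only place that needs care is the block factorization $F_\sigma=F_B\circ F_A$ with $F_A$ and $F_B$ supported on disjoint coordinate sets: this is exactly why we must replace $\pi$ by a representative $\sigma\in[\pi]_\alpha$ in which the zero-coordinates of $X$ form a prefix. The hypothesis $X\in[S_{0,k}]_\pi$ is precisely the guarantee that such a $\sigma$ exists, and $F_\sigma=F_\pi$ holds because $\sigma\sim_\alpha\pi$. Once the factorization is available, everything else is bookkeeping with monotonicity, so I do not anticipate a genuine obstacle beyond stating these block properties carefully.
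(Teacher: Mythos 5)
Your proof is correct, but it takes a genuinely different and more elementary route than the paper's. The paper also splits the sweep at position~$k$ into $F_A=F_k\circ\cdots\circ F_1$ and $F_B=F_n\circ\cdots\circ F_{k+1}$, but it then invokes the phase-space homomorphism $h=F_A$ from $(G,f,\pi)$ to the rotated system $(G,f,\pi_{\tau^k})$ (Proposition~\ref{p-hom}), observes that $F_A(Y)\le F_A(X)$ with $F_{\pi_{\tau^k}}\bigl(F_A(Y)\bigr)=F_A(X)$ so that $h(X)$ reaches a fixed point of the rotated system, and finally rules out $X$ lying over a nontrivial limit cycle using the fact that $h$ restricts to an isomorphism on limit cycles. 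You avoid all of that machinery: by comparing the intermediate state $Y'=F_A(Y)$ with $X$ itself (rather than with $F_A(X)$), you get $Y'\le X$, hence $X=F_B(Y')\le F_B(X)\le X$, so $F_B(X)=X$ and $F_\pi(X)=F_B\bigl(F_A(X)\bigr)\ge F_B(X)=X$, and Lemma~\ref{4lem1}(b) finishes. This stays entirely within the original system, needs no cycle-equivalence result, and as a bonus proves the sharper statement of Corollary~\ref{coro-main-1} in the same stroke: a non-\GOE state in $[S_{0,k}]_{\pi}$ satisfies $X\le F_\pi(X)$, and dually $X\ge F_\pi(X)$ for $[S_{1,k}]_{\pi}$, which is exactly the contrapositive form of that corollary. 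All the block-structure facts you rely on ($F_A$ and $F_B$ are monotone and modify only the disjoint coordinate sets $A$ and $B$, and $F_\sigma=F_\pi$ for $\sigma\in[\pi]_{\alpha}$) are sound, so the argument is complete.
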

%%%
%%%%%%%%%%%%%%%%%%%%%%%%%%%%%%%%%%%%%%%%%%%%%%%%%%%%%%%%%%%%%%%%%%%%%%%%%%%%%%%%%%%%%%%%%%%%%%%
%%%
\begin{proof}
	Without loss of generality we may assume that~$X\in [S_{0,k}]_{\pi}$. We shall prove that
	if~$X$ is not a \GOE state, then~$X$ reaches a fixed point.
	First, if~$X\in [S_{0,k}]_{\pi}$, then there exists an update schedule~$\sigma\in
	[\pi]_{\alpha}$ such that~$\sigma^{-1} \cdot X=S_{0,k}$. This can be seen as follows: if for $\sigma \in [\pi]_{\alpha}$, $X=\sigma \cdot S_{0,k}$, since it is a group action, we have
	$\sigma^{-1} \cdot X=\sigma^{-1} \cdot (\sigma \cdot S_{0,k})=S_{0,k}$.
	
	If~$X$ reaches a fixed point
	of~$(G,f,\sigma)$, then~$X$ also reaches a fixed point in $(G,f,\pi)$ since~$F_{\pi}=F_{\sigma}$.
	By relabeling the vertices~$\sigma_i$ by~$i$ ($1\leq i \leq n$), it is sufficient to show
	that~$X=S_{0,k}$ reaches a fixed point of~$(G,f,{\rm id})$.
	
	If~$X=S_{0,k}$ is a fixed point we are done. If~$X$ is neither a \GOE state nor a fixed point, then there
	exist states~$Y$ and~$Z$ such that~$F_{\pi}(Y)=X$ and~$F_{\pi}(X)=Z\neq X$.
	Let~$\pi'={(k+1)}{(k+2)}\cdots {n}1\cdots k$. Then we have
	\begin{align*}
		F_{\pi'}([x_1,\dots, x_k, y_{k+1},\dots, y_n])&=F_{\pi'}(F_k\circ F_{k-1}\circ \cdots \circ F_1(Y))\\
		&=F_k\circ F_{k-1}\circ \cdots \circ F_1\circ F_{\pi}(Y)\\
		&=F_k\circ F_{k-1}\circ \cdots \circ F_1(X)=
		[z_1,\dots, z_k, x_{k+1},\dots, x_n].
	\end{align*}
	By assumption, we have~$X=S_{0,k}$, that is,~$x_i=0$ for~$1\leq i \leq k$ and~$x_i=1$ for~$k+1\leq i
	\leq n$.  As a result, for any~$y_i$ ($k+1\leq i \leq n $) and any~$z_i$ ($1\leq i \leq k$) we have:
	\begin{equation*}
		[x_1,\dots, x_k, y_{k+1},\dots, y_n]\leq [z_1,\dots, z_k, x_{k+1},\dots, x_n].
	\end{equation*}
	Lemma~\ref{4lem1} gives rise to a chain, and under $(G,f,\pi')$ the states~$[x_1,\dots, x_k,
	y_{k+1},\dots, y_n]$ as well as~$[z_1,\dots, z_k, x_{k+1},\dots, x_n]$ reach the same fixed
	point. Proposition~\ref{p-hom} shows that~$h=F_k\circ \dots \circ F_1$ is a homomorphism from the
	phase space of~$(G,f,\pi)$ to that of~$(G,f,\pi')$ and, furthermore, that $h(X)=[z_1,\dots, z_k,
	x_{k+1},\dots, x_n]$.
	Suppose~$X$ reaches a nontrivial limit cycle of~$(G,f,\pi)$. Since~$h$ restricted to limit cycles
	is an isomorphism, the~$h$-image of this limit cycle remains nontrivial, whence~$[z_1,\dots, z_k,
	x_{k+1},\dots, x_n]$ reaches a nontrivial limit cycle in~$(G,f,\pi')$, which is
	impossible. Accordingly,~$X$ reaches a fixed point of~$(G,f,\pi)$, completing the proof.
\end{proof}

\begin{remark}
	Theorem~\ref{t-main} can be generalized from the vertex state set being~$\mathbb{F}_2$ to any
	poset~$P$ as long as~$P$ has a minimum and a maximum.
\end{remark}
%%
%%%
%%%%%%%%%%%%%%%%%%%%%%%%%%%%%%%%%%%%%%%%%%%%%%%%%%%%%%%%%%%%%%%%%%%%%%%%%%%%%%%%%%%%%%%%%%%%%%%%%%%%%%%%%%%%%%
%%%
\begin{corollary}\label{coro-main-1}
	Let~$(G,f,\pi)$ be a monotone \SDS. If~$X\in [S_{0,k}]_{\pi}\bigcup [S_{1,k}]_{\pi}$
	and~$F_{\pi}(X)$ is not comparable to~$X$, then~$X$ is a \GOE.  If~$X\in [S_{0,k}]_{\pi}$
	and~$X>F_{\pi}(X)$, then $X$ is a \GOE state. If~$X\in [S_{1,k}]_{\pi}$ and~$X<F_{\pi}(X)$, then~$X$
	is a \GOE state.
\end{corollary}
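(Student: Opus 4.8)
The plan is to reduce all three implications to the single observation that \emph{if $X\in[S_{0,k}]_\pi$ is not a \GOE state, then $F_\pi(X)\ge X$}, and dually that \emph{if $X\in[S_{1,k}]_\pi$ is not a \GOE state, then $F_\pi(X)\le X$}. Granting this, the corollary follows at once by contraposition. For $X\in[S_{0,k}]_\pi$: if $F_\pi(X)$ is incomparable to $X$ then in particular $F_\pi(X)\not\ge X$, and if $X>F_\pi(X)$ then again $F_\pi(X)\not\ge X$, so in either case $X$ must be a \GOE state. For $X\in[S_{1,k}]_\pi$: incomparability of $F_\pi(X)$ and $X$ gives $F_\pi(X)\not\le X$, and $X<F_\pi(X)$ gives $F_\pi(X)\not\le X$, so once more $X$ is a \GOE state.

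To prove the observation I would carry the argument in the proof of Theorem~\ref{t-main} one monotonicity step further. Assume $X\in[S_{0,k}]_\pi$ is not a \GOE state and pick $Y$ with $F_\pi(Y)=X$. As in that proof, choose $\sigma\in[\pi]_\alpha$ with $\sigma^{-1}\cdot X=S_{0,k}$ and relabel the vertices $\sigma_i\mapsto i$; since $F_\pi=F_\sigma$ and coordinate relabelings are order-automorphisms of $\mathbb{F}_2^n$ (hence preserve the relation ``$F_\pi(X)\ge X$''), we may assume $X=S_{0,k}$ and $\pi=\mathrm{id}$. Set $\pi'=(k+1)(k+2)\cdots n\,1\cdots k$ and $h=F_k\circ F_{k-1}\circ\cdots\circ F_1$; by Proposition~\ref{p-hom}, $h$ is a homomorphism from the phase space of $(G,f,\mathrm{id})$ to that of $(G,f,\pi')$. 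The computation in the proof of Theorem~\ref{t-main} gives $h(Y)=[x_1,\dots,x_k,y_{k+1},\dots,y_n]$ and $h(X)=[z_1,\dots,z_k,x_{k+1},\dots,x_n]$ with $F_{\pi'}(h(Y))=h(X)$, where $Z:=F_\pi(X)=[z_1,\dots,z_n]$, and since $h$ is a homomorphism $F_{\pi'}(h(X))=h(F_\pi(X))=h(Z)$.

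Now the extra step. Since $X=S_{0,k}$ has $x_i=0$ for $i\le k$ and $x_j=1$ for $j>k$, a coordinatewise comparison gives $h(Y)\le h(X)$ (this inequality already appears in the proof of Theorem~\ref{t-main}). Applying Lemma~\ref{4lem1}(a) to it yields $h(X)=F_{\pi'}(h(Y))\le F_{\pi'}(h(X))=h(Z)$; restricting to the coordinates $j>k$, where $h(X)$ carries $x_j=1$ and $h(Z)$ carries $z_j$, this forces $z_j=1=x_j$ for all $j>k$. Hence $Z=F_\pi(X)$ agrees with $X$ on all coordinates $>k$, while on coordinates $\le k$ one trivially has $z_i\ge 0=x_i$; therefore $F_\pi(X)\ge X$, which is the desired observation. (Together with Lemma~\ref{4lem1}(b) this incidentally re-derives Theorem~\ref{t-main}.) The case $X\in[S_{1,k}]_\pi$ is handled by reversing every inequality throughout.

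I do not expect a genuine obstacle here: the proof is Theorem~\ref{t-main} with the sharper bookkeeping that the homomorphic image $h(X)$ does not merely sit on a chain through $h(Y)$ but actually dominates it, and this propagates back to $F_\pi(X)\ge X$. The only points that will need a little care are the coordinate arithmetic for $h(Y)$, $h(X)$, $h(Z)$ and the verification that the relabeling step preserves comparability, both of which are routine and already implicit in the proof of Theorem~\ref{t-main}.
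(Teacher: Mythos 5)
Your proof is correct and follows essentially the same route as the paper: reduce to $X=S_{0,k}$ with the identity schedule, observe the key inequality $[x_1,\dots,x_k,y_{k+1},\dots,y_n]\le[z_1,\dots,z_k,x_{k+1},\dots,x_n]$, and push it through a monotone map to conclude $X\le F_\pi(X)$ whenever $X$ has a preimage. The only (cosmetic) difference is that the paper applies the complementary homomorphism $F_n\circ\cdots\circ F_{k+1}$ to that inequality and obtains $X\le Z$ in one step, whereas you apply the full rotated map $F_{\pi'}$ and then recover $X\le Z$ by inspecting coordinates, which is slightly longer but equally valid.
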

%%%
%%%%%%%%%%%%%%%%%%%%%%%%%%%%%%%%%%%%%%%%%%%%%%%%%%%%%%%%%%%%%%%%%%%%%%%%%%%%%%%%%%%%%%%%%%%%%%%%%%%%%%%%%%%%%%
%%%
\begin{proof} 
	Suppose $X\in [S_{0,k}]_{\pi}$. It suffices to consider $X=S_{0,k}$ under~$(G,f,{\rm id})$ by relabeling. Suppose $F_{\pi}(Y)=X$ and~$F_{\pi}(X)=Z\neq X$.
	Note from the phase space of $(G,f,\pi_{\tau^k})$ to the phase space of $(G,f,\pi)$,
	$h=F_n\circ \cdots \circ F_{k+1}$ gives the homomorphism according to Proposition~\ref{p-hom}. It can be checked that $h([x_1,\dots, x_k, y_{k+1},\dots, y_n])=X$
	and $h([z_1,\dots, z_k, x_{k+1},\dots, x_n])=Z$. By assumption,  $h$ as compositions of monotone functions is monotone and $[x_1,\dots, x_k, y_{k+1},\dots, y_n]< [z_1,\dots, z_k, x_{k+1},\dots, x_n]$. Therefore, $X<Z=F_{\pi}(X)$. Hence, if $X$ is not a \GOE state and $F_{\pi}(X)\neq X$, then $X<F_{\pi}(X)$, which implies the corollary. 
\end{proof}

\GOE states have been analyzed extensively~\cite{goe1,moore,myhill}, see for instance the Garden of
Eden theorem of Moore and Myhill in the context of (infinite) cellular
automata~\cite{moore,myhill}. Determining if a particular state is a \GOE state is generally
hard~\cite{prx}. Given an update schedule, Theorem~\ref{t-main} and Corollary~\ref{coro-main-1}
allow one to identify states which are either \GOE states or reach a fixed point.  It is worth
pointing out that the framework presented facilitates identification of fixed points in monotone
dynamical systems where vertices are not updated sequentially since fixed points do not depend on
the order of the updates. 
%In particular, the set of fixed points are the same for~$(G, f, \pi)$
%and~$(G,f)$.

\begin{example}
	\begin{figure}[!htb]
		\centering
		\includegraphics[width=0.4\textwidth]{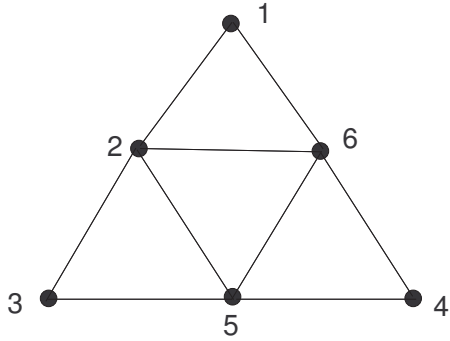}
		\caption{A graph $G$ with $6$ vertices.}
		\label{fig:thresh-ex}
	\end{figure}
	Considers Figure~\ref{fig:thresh-ex} and let~$\pi=241635$. It is easy to check that
	%	\begin{multline*}
	%	[\pi]_{\alpha}=\{241536,214536,245136,214563,245316,\\
	%	245163,421536,425136,421563,425316,425163\} \;,\phantom{This will look a little nicer?}
	%	\end{multline*}
	\begin{multline*}
		[\pi]_{\alpha}=\{241635,241365,  243165, 214635,  214365,\\
		213465, 234165, 231465, 421635, 421365,423165\} \;,\phantom{This will look a little nicer?}
	\end{multline*}
	%		\begin{multline*}
	%		[\pi]_{\alpha}^{-1}=\{315264, 314265, 412365, 312465, 215364,214365, 413265,213465, 325164,324165,423165\} \;,\phantom{This will look a little nicer?}
	%		\end{multline*}
	
	%		\begin{multline*}
	%		[\pi^{-1}]_{\alpha}=\{315246, 215346, 415236, 216345, 514236\\
	%		416235,325146, 425136, 326145, 524136, 426135\} \;,\phantom{This will look a little nicer?}
	%		\end{multline*}
	and we obtain
	%	\begin{align*}
	%	[S_{0,3}]_{\pi}=\{
	%	010101,001101,011001,001110,011010,101001,100101,
	%	100110,101010\} \;.
	%	\end{align*}
	%		\begin{align*}
	%		[S_{0,3}]_{\pi}=\{
	%	001011, 101001\} \;.
	%		\end{align*}
	\begin{align*}
		[S_{0,3}]_{\pi}=\{
		001011, 100011,000111\} \;.
	\end{align*}
	Suppose all local functions are simple threshold
	functions, where the threshold values for the vertices
	$1$,~$2$,~$3$,~$4$,~$5$, and~$6$ are
	$1$,~$2$,~$1$,~$2$,~$2$, and~$3$, respectively.
	Then for~$X=[100011]\in [S_{0,3}]_{\pi}$ we have~$F_{\pi}(X)=[111111]$ which is a fixed
	point, that is,~$X$ reaches a fixed point. Next, suppose the threshold values are
	$2$,~$4$,~$1$,~$2$,~$2$, and~$3$,
	respectively.
	Then we have~$F_{\pi} (X)=[001111]$ which is incomparable with $X$, and we can conclude that in view of Corollary~\ref{coro-main-1} $X$
	is a \GOE state.
\end{example}
For a state~$X$ and~$i\in \{0,1\}$ we set~$\vartheta_{i}(X)=\{\pi \mid \pi^{-1}\cdot X=S_{i,k}\quad
\mbox{for some $k$}\}$.
%%
%%%
%%%%%%%%%%%%%%%%%%%%%%%%%%%%%%%%%%%%%%%%%%%%%%%%%%%%%%%%%%%%%%%%%%%%%%%%%%%%%%%%%%%%%%%%%%%%%%%%%%%%%%%%%%
%%%
\begin{lemma}\label{lem:3mainlem}
  Suppose~$X$ is a non-trivial periodic point in the phase space~$\mathbb{G}$ of some dynamical
  system.  Then, for any~$\pi\in \vartheta_0(X)\bigcup \vartheta_1(X)$, no monotone \SDS~$(G,f,\pi)$
  can generate~$\mathbb{G}$.
\end{lemma}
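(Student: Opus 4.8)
The plan is to argue by contradiction and reduce everything to Theorem~\ref{t-main}. Suppose, contrary to the claim, that some monotone \SDS~$(G,f,\pi)$ generates~$\mathbb{G}$, with~$\pi\in \vartheta_0(X)\cup\vartheta_1(X)$. Since~$(G,f,\pi)$ generates~$\mathbb{G}$, the state~$X$ is a non-trivial periodic point of the map~$F_\pi$, i.e.~it lies on a limit cycle of~$F_\pi$ of length at least two.

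First I would unpack the hypothesis on~$\pi$. By the definition of~$\vartheta_i$ there is an~$i\in\{0,1\}$ and a~$k$ with~$\pi^{-1}\cdot X=S_{i,k}$. Because the $\mathbb{S}_n$-action on~$\mathbb{F}_2^n$ is a group action, applying~$\pi$ yields~$X=\pi\cdot(\pi^{-1}\cdot X)=\pi\cdot S_{i,k}$. Since trivially~$\pi\in[\pi]_\alpha$, this exhibits~$X$ as a member of~$[S_{i,k}]_\pi=\{\sigma\cdot S_{i,k}\mid\sigma\in[\pi]_\alpha\}$, so~$X\in[S_{0,k}]_\pi\cup[S_{1,k}]_\pi$. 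Theorem~\ref{t-main} now applies to~$X$ and forces one of two alternatives: either~$X$ is a \GOE state of~$(G,f,\pi)$, or~$X$ reaches a fixed point of~$(G,f,\pi)$.

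Next I would rule out both alternatives using that~$X$ is a non-trivial periodic point of~$F_\pi$. Its predecessor on the limit cycle maps onto~$X$ under~$F_\pi$, so~$X$ has a preimage and hence is not a \GOE state. Moreover, iterating~$F_\pi$ from~$X$ stays on the limit cycle forever, cycling through at least two distinct states, so the forward orbit of~$X$ never becomes stationary; in particular~$X$ cannot reach a fixed point. Both conclusions of Theorem~\ref{t-main} are therefore contradicted, so no monotone \SDS~$(G,f,\pi)$ with~$\pi\in\vartheta_0(X)\cup\vartheta_1(X)$ can generate~$\mathbb{G}$.

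I do not anticipate a substantive obstacle: the argument is essentially a repackaging of Theorem~\ref{t-main}. The only points requiring a little care are the bookkeeping that turns~$\pi^{-1}\cdot X=S_{i,k}$ into the membership~$X\in[S_{i,k}]_\pi$ needed to invoke Theorem~\ref{t-main}, and the observation that ``$(G,f,\pi)$ generates~$\mathbb{G}$'' is exactly what guarantees that~$X$ remains a non-trivial periodic point of~$F_\pi$, so that the dichotomy ``\GOE state or reaches a fixed point'' is genuinely incompatible with the status of~$X$.
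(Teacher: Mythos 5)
Your proposal is correct and follows essentially the same route as the paper: invoke Theorem~\ref{t-main} to conclude that $X$ is either a \GOE state or reaches a fixed point, and note that a non-trivial periodic point is neither. The extra bookkeeping you supply (converting $\pi^{-1}\cdot X=S_{i,k}$ into $X\in[S_{i,k}]_\pi$ via the group action and $\pi\in[\pi]_\alpha$) is a correct elaboration of a step the paper leaves implicit.
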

%%%
%%%%%%%%%%%%%%%%%%%%%%%%%%%%%%%%%%%%%%%%%%%%%%%%%%%%%%%%%%%%%%%%%%%%%%%%%%%%%%%%%%%%%%%%%%%%%%%%%%%%%%%%%%
%%%
\proof Suppose $\mathbb{G}$ is the phase space of a monotone \SDS~$(G,f,\pi)$ with~$\pi\in
\vartheta_0(X)\bigcup \vartheta_1(X)$. According to Theorem~\ref{t-main}, $X$ is either a \GOE state
or reaches a fixed point of~$(G,f,\pi)$, which implies that $X$ is not a non-trivial periodic
point. \qed

Lemma~\ref{lem:3mainlem} gives a sufficient condition for a phase space to not be generated by some
monotone \SDS. Namely, the existence of a subset of states $A$ such that all states in $A$ are
non-trivial periodic states and $\bigcup_{X\in A} \vartheta_i(X) =\mathbb{S}_n$.
%%%
%%%%%%%%%%%%%%%%%%%%%%%%%%%%%%%%%%%%%%%%%%%%%%%%%%%%%%%%%%%%%%%%%%%%%%%%%%%%%%%%%%%%%%%%%%%%%%%%%%%%%%%%%%
%%%
\begin{lemma}\label{lem:nonintersect}
If two states~$X$ and~$Y$ are not comparable, then $\vartheta_0(X)\bigcap \vartheta_0(Y)=\varnothing$.
\end{lemma}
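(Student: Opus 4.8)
The plan is to argue by contradiction: suppose some $\pi$ lies in $\vartheta_0(X)\cap\vartheta_0(Y)$, and derive that $X$ and $Y$ must be comparable. By definition of $\vartheta_0$, there exist indices $k$ and $\ell$ such that $\pi^{-1}\cdot X=S_{0,k}$ and $\pi^{-1}\cdot Y=S_{0,\ell}$. Since the $\mathbb{S}_n$-action is an order-automorphism of the poset $\mathbb{F}_2^n$ (it merely permutes coordinates, and the partial order is defined coordinatewise), $X$ and $Y$ are comparable if and only if $\pi^{-1}\cdot X$ and $\pi^{-1}\cdot Y$ are comparable. So it suffices to show that $S_{0,k}$ and $S_{0,\ell}$ are comparable for all choices of $k,\ell$.

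The key observation is that the states $S_{0,k}=[0,\dots,0,1,\dots,1]$ (with $k$ zeros followed by $n-k$ ones) form a chain in $\mathbb{F}_2^n$: if $k\le \ell$, then coordinatewise $S_{0,\ell}\le S_{0,k}$, because wherever $S_{0,\ell}$ has a $1$ (positions $\ell+1,\dots,n$) the state $S_{0,k}$ also has a $1$ (as $k\le\ell$ means positions $>\ell$ are among positions $>k$). Hence $S_{0,k}$ and $S_{0,\ell}$ are always comparable, so $X=\pi\cdot S_{0,k}$ and $Y=\pi\cdot S_{0,\ell}$ are comparable, contradicting the hypothesis. Therefore no such $\pi$ exists and $\vartheta_0(X)\cap\vartheta_0(Y)=\varnothing$.

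There is really no hard part here; the only thing to be careful about is the direction of the action and that $\pi^{-1}\cdot X$ is well-defined only up to the choice of which $S_{0,k}$ it equals, but since \emph{every} pair $S_{0,k},S_{0,\ell}$ is comparable this ambiguity is harmless. I would state explicitly, perhaps as a one-line remark, that permuting coordinates preserves and reflects the partial order, since that is the one structural fact the argument rests on. (The analogous statement with $\vartheta_1$ in place of $\vartheta_0$ holds by the same reasoning, as the $S_{1,k}$ also form a chain; one might note this if it is needed later.)
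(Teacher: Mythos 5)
Your proof is correct and follows essentially the same route as the paper: assume $\pi\in\vartheta_0(X)\cap\vartheta_0(Y)$, observe that $\pi^{-1}\cdot X=S_{0,k_X}$ and $\pi^{-1}\cdot Y=S_{0,k_Y}$ are comparable because the $S_{0,k}$ form a chain, and conclude that $X$ and $Y$ are comparable, a contradiction. Your explicit remark that coordinate permutation preserves and reflects the partial order is a point the paper leaves implicit, but the argument is otherwise identical.
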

%%%
%%%%%%%%%%%%%%%%%%%%%%%%%%%%%%%%%%%%%%%%%%%%%%%%%%%%%%%%%%%%%%%%%%%%%%%%%%%%%%%%%%%%%%%%%%%%%%%%%%%%%%%%%%
%%%
\proof Suppose there exists~$\pi=\pi_1\pi_2\cdots \pi_n\in \vartheta_0(X)\bigcap \vartheta_0(Y)$.
We have $\pi^{-1} \cdot Z=S_{0,k}$ if and only if $z_{\pi_i}=0$ for $1\leq i\leq k$ and
$z_{\pi_i}=1$ otherwise. By assumption,~$\pi^{-1} \cdot X = S_{0,k_X}$ and $\pi^{-1} \cdot
Y=S_{0,k_Y}$, and since $k_X\le k_Y$ or $k_Y\le k_X$ it follows that~$X$ and~$Y$ are comparable,
which is impossible.\qed

Lemma~\ref{lem:nonintersect} immediately implies that if~$A$ is a set of states which are mutually
incomparable, then
\begin{equation*}
  \sum_{X\in A} |\vartheta_0(X)| = \left|\bigcup_{X\in A} \vartheta_0(X)\right| \leq n! \;.
\end{equation*}
An immediate consequence of this is the celebrated LYM inequality~\cite{lubell} and Sperner's lemma which estimate
the sizes of incomparable sets within the power set of a finite set of size~$n$.
%%%
%%%%%%%%%%%%%%%%%%%%%%%%%%%%%%%%%%%%%%%%%%%%%%%%%%%%%%%%%%%%%%%%%%%%%%%%%%%%%%%%%%%%%%%%%%%%%%%%%%%%%%%%%%
%%%
\begin{proposition}\label{P:uncomp}
In~$(\mathbb{F}_2^n,\le)$ let~$A\subset \mathbb{F}_2^n$ be a set of states whose elements are
mutually incomparable.  Suppose there are exactly~$a_k$ states of~$A$ having~$k$ coordinates for
which~$x_i=0$. Then we have
\begin{equation*}
	\sum_k \frac{a_k}{{n\choose k}} \leq 1 \;.
\end{equation*}
In particular, we have $\mid A\mid \leq {n\choose \lfloor \frac{n}{2}\rfloor}$.
\end{proposition}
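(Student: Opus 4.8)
The plan is to convert the disjointness statement recorded right after Lemma~\ref{lem:nonintersect} into the asserted inequality by a direct count of $|\vartheta_0(X)|$. First I would fix a state $X\in A$ having exactly $k$ coordinates equal to $0$ and determine precisely which schedules lie in $\vartheta_0(X)$. By the characterization used in the proof of Lemma~\ref{lem:nonintersect}, $\pi^{-1}\cdot X = S_{0,k}$ holds iff $x_{\pi_i}=0$ for $1\le i\le k$ and $x_{\pi_i}=1$ for $k+1\le i\le n$; in other words, $\pi$ must list the $k$ zero-coordinates of $X$ (in any order) in its first $k$ positions and the $n-k$ one-coordinates of $X$ (in any order) in its last $n-k$ positions. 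Hence $|\vartheta_0(X)| = k!\,(n-k)!$, a number depending on $X$ only through $k$.

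Next I would invoke incomparability. Since the elements of $A$ are mutually incomparable, Lemma~\ref{lem:nonintersect} gives that the sets $\vartheta_0(X)$, $X\in A$, are pairwise disjoint subsets of $\mathbb{S}_n$, so
\[
\sum_{X\in A}|\vartheta_0(X)| = \Bigl|\bigcup_{X\in A}\vartheta_0(X)\Bigr| \le |\mathbb{S}_n| = n! .
\]
Grouping the states of $A$ by their number $k$ of zero coordinates, the left-hand side equals $\sum_k a_k\,k!\,(n-k)!$. Dividing through by $n!$ yields
\[
\sum_k \frac{a_k\,k!\,(n-k)!}{n!} = \sum_k \frac{a_k}{\binom{n}{k}} \le 1 ,
\]
which is the claimed refinement of the LYM inequality.

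Finally, for the bound on $|A|$, I would use that $\binom{n}{k}$ attains its maximum at $k=\lfloor n/2\rfloor$, so $1/\binom{n}{\lfloor n/2\rfloor}\le 1/\binom{n}{k}$ for every $k$, whence
\[
\frac{|A|}{\binom{n}{\lfloor n/2\rfloor}} = \sum_k \frac{a_k}{\binom{n}{\lfloor n/2\rfloor}} \le \sum_k \frac{a_k}{\binom{n}{k}} \le 1 ,
\]
giving $|A|\le\binom{n}{\lfloor n/2\rfloor}$. I do not anticipate a genuine obstacle here: the only real content is the identity $|\vartheta_0(X)| = k!\,(n-k)!$, which is immediate once the membership condition for $\vartheta_0(X)$ is written out, and the remainder is bookkeeping. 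One could run the same argument with $\vartheta_1$ in place of $\vartheta_0$, counting states by their number of ones; this is a harmless cosmetic change, but using $\vartheta_0$ matches the hypothesis on $a_k$ as stated.
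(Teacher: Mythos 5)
Your proposal is correct and follows essentially the same route as the paper: count $|\vartheta_0(X)| = k!\,(n-k)!$, use the pairwise disjointness from Lemma~\ref{lem:nonintersect} to bound the total by $n!$, and then deduce the Sperner bound from the maximality of $\binom{n}{\lfloor n/2\rfloor}$. No gaps.
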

%%%
%%%%%%%%%%%%%%%%%%%%%%%%%%%%%%%%%%%%%%%%%%%%%%%%%%%%%%%%%%%%%%%%%%%%%%%%%%%%%%%%%%%%%%%%%%%%%%%%%%%%%%%%%%
%%%
\proof For $X\in A$ having $k$ coordinates for which~$x_i=0$, it is easy to see that
$|\vartheta_0(X)|=k!(n-k)!$.  Then
\begin{equation*}
\sum_{X\in A } |\vartheta_0(X)|=\sum_k a_k k!(n-k)!\leq n! \;,
\end{equation*}
which produces the LYM inequality. Using the fact that~${n\choose k}\leq {n\choose \lfloor
  \frac{n}{2}\rfloor}$ we obtain Sperner's lemma
\begin{equation*}
|A|=\sum_{k}a_k \leq {n\choose \lfloor \frac{n}{2}\rfloor}.
\end{equation*}
\qed

The proof in Lubell~\cite{lubell} is essentially the same as the one given here, but we have a
different motivation of relating states (or subsets) to permutations (i.e., update schedules).  Note
that if~$k\neq \lfloor \frac{n}{2}\rfloor$ and~$k\neq n-\lfloor \frac{n}{2}\rfloor$, then~${n\choose
  k}< {n\choose \lfloor \frac{n}{2}\rfloor}= {n\choose n-\lfloor \frac{n}{2}\rfloor}$.  Hence,~$\mid
A\mid = {n\choose \lfloor \frac{n}{2}\rfloor}$ only if~$A$ is either the set of states having
exactly~${ \lfloor \frac{n}{2}\rfloor}$ coordinates, where~$x_i=1$ or the set having exactly~$n-{
  \lfloor \frac{n}{2}\rfloor}$ such coordinates.

According to Sperner's lemma, the maximum possible size of a limit cycle (as an anti-chain) of a
monotone system is ${n\choose \lfloor \frac{n}{2}\rfloor}$. Our next theorem shows that this maximum
is not achievable for monotone \SDS.

%%%
%%%%%%%%%%%%%%%%%%%%%%%%%%%%%%%%%%%%%%%%%%%%%%%%%%%%%%%%%%%%%%%%%%%%%%%%%%%%%%%%%%%%%%%%%%%%%%%%%%%%%%%%%%
%%%
\begin{theorem}\label{thm:maxlength}
 Let~$(G,f,\pi)$ be a monotone \SDS. The size of any limit cycle of~$(G,f,\pi)$ is strictly less
 than~${n \choose \lfloor n/2 \rfloor}$.
\end{theorem}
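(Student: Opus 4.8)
The plan is to argue that if some limit cycle $C$ of $(G,f,\pi)$ had size exactly $N := {n \choose \lfloor n/2 \rfloor}$, then by Proposition~\ref{P:uncomp} (the Sperner bound) the states in $C$ must be exactly the set of all states with a fixed weight $w \in \{\lfloor n/2\rfloor, \lceil n/2\rceil\}$ — that is, all $X$ with exactly $k$ zero-coordinates, for $k = \lfloor n/2\rfloor$ or $k = n - \lfloor n/2\rfloor$. Now I would exhibit a specific state in this layer that is forced by Theorem~\ref{t-main} to be either a \GOE state or to reach a fixed point, hence not a non-trivial periodic point, contradicting its membership in $C$. The natural candidate is $S_{0,k}$ (respectively $S_{1,k}$), which lies trivially in $[S_{0,k}]_\pi$ for \emph{any} schedule $\pi$ (take $\sigma = \pi$ and note $\pi^{-1}\cdot(\pi\cdot S_{0,k})=S_{0,k}$, or more simply observe $S_{0,k}$ itself is always in its own $[\ \cdot\ ]_\pi$-class since the identity fixes it). Since $S_{0,k}$ has exactly $k$ zero-coordinates with $k=\lfloor n/2\rfloor$, it belongs to the putative anti-chain $C$; but Theorem~\ref{t-main} says $S_{0,k}\in[S_{0,k}]_\pi$ is either a \GOE state or reaches a fixed point, so it cannot be a non-trivial periodic point. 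This is the contradiction.

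Carrying this out step by step: First, suppose for contradiction that a limit cycle $C$ has $|C| = N$. Second, invoke Proposition~\ref{P:uncomp} together with the sharpness discussion immediately following its proof: equality $|A| = {n\choose \lfloor n/2\rfloor}$ in Sperner's lemma forces $A$ to be one of the two middle layers, i.e.\ $C = \{X : X \text{ has exactly } k \text{ zeros}\}$ for $k=\lfloor n/2\rfloor$ or $k=n-\lfloor n/2\rfloor$. (Here one uses that a limit cycle is an anti-chain, Lemma~\ref{4lem1}(c).) Third, observe that the state $S_{0,k}$ (or $S_{1,k}$, matching whichever layer $C$ is) lies in $C$ by the weight count, and simultaneously $S_{0,k}\in[S_{0,k}]_\pi$ for the given $\pi$. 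Fourth, apply Theorem~\ref{t-main} to conclude $S_{0,k}$ is a \GOE state or reaches a fixed point, hence is not a non-trivial periodic point — contradicting $S_{0,k}\in C$ where $|C|=N\ge 2$. Therefore $|C| < N$.

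The main obstacle is the equality-case analysis in Sperner's lemma: one must be sure that $|A| = {n\choose \lfloor n/2\rfloor}$ genuinely forces $A$ to be a single full middle layer. For $n$ even this is the classical equality condition (the unique maximum antichain is the middle layer); for $n$ odd there are two middle layers of equal size, and one needs the LYM refinement $\sum_k a_k/{n\choose k}\le 1$ to see that $A$ cannot straddle both layers — if $a_k>0$ for both $k=\lfloor n/2\rfloor$ and $k=\lceil n/2\rceil$ then $\sum_k a_k/{n\choose k} = \sum_k a_k/{n\choose \lfloor n/2\rfloor}$, so equality in LYM still forces $\sum_k a_k = {n\choose \lfloor n/2\rfloor}$, but then $A$ is a maximum antichain and by the known characterization of maximum antichains in the Boolean lattice (for odd $n$, exactly the two middle layers are the extremal ones, and any maximum antichain equals one of them) $A$ must be a single layer. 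Conveniently, the paper has already recorded exactly this fact in the paragraph after Proposition~\ref{P:uncomp} ("$|A| = {n\choose\lfloor n/2\rfloor}$ only if $A$ is either the set of states having exactly $\lfloor n/2\rfloor$ coordinates where $x_i=1$ or the set having exactly $n-\lfloor n/2\rfloor$ such coordinates"), so I would simply cite that observation rather than re-prove it. Once that is in hand, the rest is immediate from Theorem~\ref{t-main}.
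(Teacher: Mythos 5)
Your overall strategy is the paper's: use the anti-chain property (Lemma~\ref{4lem1}(c)) together with the equality case of Sperner's lemma to force a limit cycle of size $\binom{n}{\lfloor n/2\rfloor}$ to be a full middle layer, and then exhibit a state of that layer to which Theorem~\ref{t-main} applies. However, the step where you produce that state contains a genuine error: you assert that $S_{0,k}$ ``lies trivially in $[S_{0,k}]_\pi$ for any schedule $\pi$ \dots since the identity fixes it.'' By definition $[S_{0,k}]_\pi=\{\sigma\cdot S_{0,k}\mid \sigma\in[\pi]_{\alpha}\}$, so $S_{0,k}\in[S_{0,k}]_\pi$ requires some $\sigma\in[\pi]_{\alpha}$ that \emph{stabilizes} $S_{0,k}$, and the identity permutation is in general not an element of $[\pi]_{\alpha}$. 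Concretely, take $G=K_3$ and $\pi=213$: then $[\pi]_{\alpha}=\{213\}$ and $[S_{0,1}]_\pi=\{\pi\cdot[0,1,1]\}=\{[1,0,1]\}$, which does not contain $S_{0,1}=[0,1,1]$; indeed $\pi^{-1}\cdot[0,1,1]=[1,0,1]$ is of neither form $S_{0,j}$ nor $S_{1,j}$, so $S_{0,1}$ is not covered by Theorem~\ref{t-main} at all for this $\pi$. Hence your intended contradiction does not materialize as written. Your own parenthetical computation $\pi^{-1}\cdot(\pi\cdot S_{0,k})=S_{0,k}$ in fact shows that the state $\pi\cdot S_{0,k}$ --- not $S_{0,k}$ --- belongs to $[S_{0,k}]_\pi$.

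The repair is immediate and lands exactly on the paper's argument: use $\pi\cdot S_{0,k}$ as the witness. Since the $\mathbb{S}_n$-action only permutes coordinates, $\pi\cdot S_{0,k}$ has exactly $k$ zeros and therefore lies in the full middle layer $C$; since $\pi\in[\pi]_{\alpha}$, it lies in $[S_{0,k}]_\pi$, so by Theorem~\ref{t-main} it is a \GOE state or reaches a fixed point, contradicting $|C|\ge 2$. The paper packages the same idea via $\vartheta_0$: by Lemma~\ref{lem:nonintersect} the sets $\vartheta_0(X)$, $X\in C$, are pairwise disjoint, each of size $\lfloor n/2\rfloor!\,\lceil n/2\rceil!$, so their union is all of $\mathbb{S}_n$; in particular $\pi\in\vartheta_0(X)$ for some $X\in C$, and Lemma~\ref{lem:3mainlem} finishes. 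That counting formulation is also slightly more robust than yours, since it only needs that every state of $C$ lies in one of the two middle layers (immediate from the LYM refinement in Proposition~\ref{P:uncomp}), rather than the full equality-case characterization of maximum antichains on which your third step relies.
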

%%%
%%%%%%%%%%%%%%%%%%%%%%%%%%%%%%%%%%%%%%%%%%%%%%%%%%%%%%%%%%%%%%%%%%%%%%%%%%%%%%%%%%%%%%%%%%%%%%%%%%%%%%%%%%
%%%
\proof We prove by contradiction. Suppose a monotone \SDS has a limit cycle of length~${n \choose
  \lfloor n/2 \rfloor}$. Since a limit cycle of a monotone system gives an anti-chain, the limit
cycle of length~${n \choose \lfloor n/2 \rfloor}$ must be either the set~$A_1$ of states having
exactly~${ \lfloor \frac{n}{2}\rfloor}$ coordinates where~$x_i=1$, or the set~$A_2$ of states having
exactly~$n-{ \lfloor \frac{n}{2}\rfloor}$ such coordinates.  According to
Lemma~\ref{lem:nonintersect}, we have
\begin{equation*}
 \left|\bigcup_{X\in A_i} \vartheta_0(X)\right|=\sum_{X\in A_i} |\vartheta_0(X)|=
 \sum_{X\in A_i} { \lfloor \frac{n}{2}\rfloor}! \left(n-{ \lfloor \frac{n}{2}\rfloor}\right)!=
     {n \choose \lfloor n/2 \rfloor}  { \lfloor \frac{n}{2}\rfloor}! \left(n-{ \lfloor \frac{n}{2}\rfloor}\right)!=n!,
\end{equation*}
for any $i\in \{1,2\}$, that is, $\bigcup_{X\in A_i} \vartheta_0(X)$ contains all update schedules.
Without loss of generality, we may assume that~$A_1$ is the limit cycle of~$(G,f,\pi)$.
Lemma~\ref{lem:3mainlem} shows that a phase space with states in~$A_1$ being non-trivial periodic
points can not be realized by any monotone \SDS~$(G',f',\sigma)$ for any~$\sigma\in \bigcup_{X\in
  A_1} \vartheta_0(X)$, whence the theorem. \qed

We proceed by presenting some implications of Theorem~\ref{t-main}:
%%
%%%
%%%%%%%%%%%%%%%%%%%%%%%%%%%%%%%%%%%%%%%%%%%%%%%%%%%%%%%%%%%%%%%%%%%%%%%%%%%%%%%%%%%%%%%%%%%%%%%%%%%%%%%%%%
%%%
\begin{corollary}\label{prob}
  Let~$G$ be a graph on~$n>1$ vertices with fixed local, monotone functions~$f_i$. Selecting a
  permutation~$\pi$ induces the \SDS~$(G,f,\pi)$, and the probability that under a random~$\pi$
  having~$X$ as either a \GOE state or reaching a fixed point is at least~$\frac{2}{{n \choose
      {\lfloor n/2\rfloor}}}$.  Furthermore, the probability of a random state being either a \GOE
  state or reaching a fixed point under a random update schedule is at least~$\frac{n}{2^{n-1}}$.
\end{corollary}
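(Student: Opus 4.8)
The plan is to reduce both statements to counting the update schedules $\pi$ for which a fixed state is ``good'', in the sense that Theorem~\ref{t-main} applies to it, and then to sum these counts over all states for the second claim.

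First I would fix a state $X$ and observe that whenever $\pi^{-1}\cdot X = S_{i,j}$ for some $i\in\{0,1\}$ and some $j$, then, since $\pi\in[\pi]_\alpha$, we have $X=\pi\cdot S_{i,j}\in [S_{0,j}]_\pi\cup[S_{1,j}]_\pi$; hence by Theorem~\ref{t-main} (and by Proposition~\ref{observation} in the two degenerate cases $X\in\{{\bf 0},{\bf 1}\}$, where $j\in\{0,n\}$) the state $X$ is a \GOE state or reaches a fixed point of $(G,f,\pi)$. Consequently the set of schedules $\pi$ for which $X$ is a \GOE state or reaches a fixed point contains $\vartheta_0(X)\cup\vartheta_1(X)$, and it suffices to lower-bound $|\vartheta_0(X)\cup\vartheta_1(X)|$. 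If $X$ has exactly $k$ coordinates equal to $0$, then $\pi$ lies in $\vartheta_0(X)$ precisely when $\{\pi_1,\dots,\pi_k\}$ is the set of zero-coordinates of $X$, so $|\vartheta_0(X)|=k!(n-k)!$, and symmetrically $|\vartheta_1(X)|=k!(n-k)!$. Since $S_{0,j}$ and $S_{1,j'}$ can be equal only when they both equal ${\bf 0}$ or both equal ${\bf 1}$, for $X\notin\{{\bf 0},{\bf 1}\}$ the sets $\vartheta_0(X)$ and $\vartheta_1(X)$ are disjoint (compare the proof of Lemma~\ref{lem:nonintersect}), so $|\vartheta_0(X)\cup\vartheta_1(X)|=2\,k!(n-k)!$. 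Dividing by $n!$ gives probability at least $2/{n\choose k}\ge 2/{n\choose \lfloor n/2\rfloor}$; for $X\in\{{\bf 0},{\bf 1}\}$ every schedule is good and the bound holds trivially since ${n\choose \lfloor n/2\rfloor}\ge 2$ for $n>1$. This establishes the first assertion.

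For the second assertion I would sum over all $2^n$ states. Since there are ${n\choose k}$ states with exactly $k$ zero-coordinates,
\begin{equation*}
\sum_{X\in\mathbb{F}_2^n}|\vartheta_0(X)|=\sum_{k=0}^{n}{n\choose k}k!(n-k)!=(n+1)\,n!,
\end{equation*}
and the same holds for $\sum_X|\vartheta_1(X)|$. Only $X={\bf 0}$ and $X={\bf 1}$ contribute to $\vartheta_0(X)\cap\vartheta_1(X)$, and for each of them this intersection is all of $\mathbb{S}_n$, so $\sum_X|\vartheta_0(X)\cap\vartheta_1(X)|=2\,n!$. Inclusion--exclusion then yields $\sum_X|\vartheta_0(X)\cup\vartheta_1(X)|=2(n+1)\,n!-2\,n!=2n\cdot n!$. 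Hence the number of pairs $(X,\pi)$ for which $X$ is a \GOE state or reaches a fixed point of $(G,f,\pi)$ is at least $2n\cdot n!$ out of the $2^n\,n!$ equally likely pairs, giving probability at least $2n/2^n=n/2^{n-1}$.

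The computations are routine; the only points that require care are the degenerate states ${\bf 0}$ and ${\bf 1}$, where $\vartheta_0$ and $\vartheta_1$ both coincide with $\mathbb{S}_n$ and must not be double counted in the inclusion--exclusion, and the fact that one only obtains a lower bound on the good schedules, which is exactly what is needed. I do not anticipate a genuine obstacle.
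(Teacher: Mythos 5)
Your proof is correct and takes essentially the same route as the paper: for a state with $k$ zero-coordinates, count the $k!(n-k)!$ schedules realizing it as $S_{0,k}$ and the $k!(n-k)!$ realizing it as $S_{1,n-k}$, invoke Theorem~\ref{t-main} (and Proposition~\ref{observation} for ${\bf 0}$ and ${\bf 1}$), and sum over all states to obtain $2n\cdot n!$ good pairs out of $2^n n!$. Your explicit inclusion--exclusion via $\vartheta_0(X)\cup\vartheta_1(X)$ is merely a more careful bookkeeping of the same count the paper writes as $\sum_{0<m<n}{n\choose m}2m!(n-m)!+2n!$.
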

%%%
%%%%%%%%%%%%%%%%%%%%%%%%%%%%%%%%%%%%%%%%%%%%%%%%%%%%%%%%%%%%%%%%%%%%%%%%%%%%%%%%%%%%%%%%%%%%%%%%%%%%%%%%%%
%%%
\proof Let~$X$ be some fixed state with~$m$ coordinates such that~$x_j=0$ and $n-m$ coordinates for
which~$x_j=1$ where $0<m<n$.  Considering~$\pi^{-1} \cdot X=S_{0,m}$ and~$\pi^{-1} \cdot
X=S_{1,n-m}$, we conclude that there are~$m!(n-m)!$ different update schedules such that~$X$ is of
the form~$S_{0,m}$ and~$m!(n-m)!$ different update schedules for which~$X$ is of the
form~$S_{1,n-m}$.  By Theorem~\ref{t-main}, for each such permutation,~$X$ is either a \GOE state
or reaches a fixed point.  Thus we obtain the first probability to be at least
\begin{equation*}
\frac{2m!(n-m)!}{n!}=\frac{2}{{n\choose m}}\geq \frac{2}{{n\choose {\lfloor n/2 \rfloor}}} \;,
\end{equation*}
where the last inequality follows from~${n\choose m}\leq {n\choose {\lfloor n/2 \rfloor}}$.
Proposition~\ref{observation} guarantees that in case~$X={\bf 0}$ or~$X={\bf 1}$, $X$ is either a
\GOE state or reaches a fixed point for any update schedule, whence in this case the probability
is~$1$.

The total number of pairs~$(X,\pi)$, where~$X$ is either a \GOE state or reaches a fixed point
of~$(G,f,\pi)$ is at least~$\sum_{0<m<n}{n\choose m}2m!(n-m)!+2n!=2n\cdot n!$. Thus, the second
probability in question is at least~$\frac{2n\cdot n!}{2^n n!}=\frac{n}{2^{n-1}}$, completing the
proof of the corollary. \qed
%%
%%%
%%%%%%%%%%%%%%%%%%%%%%%%%%%%%%%%%%%%%%%%%%%%%%%%%%%%%%%%%%%%%%%%%%%%%%%%%%%%%%%%%%%%%%%%%%%%%%%%%%%%%%%%%%
%%%
\begin{corollary}
Let~$G$ be a bipartite graph with vertex set~$U\dot\bigcup V$ where~$U=\{u_1,u_2,\dots, u_m\}$
and~$V=\{v_1,v_2,\dots, v_n\}$.  Let $\pi=u_1\cdots u_m v_1\cdots v_n$ and~$(G,f,\pi)$ be a monotone
\SDS.  Then, the probability of a random state being either a \GOE state or reaching a fixed point
in~$(G,f,\pi)$ is at least~$\frac{2^{m+1}+2^{n+1}-2^2}{2^{m+n}}$.
\end{corollary}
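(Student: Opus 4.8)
The plan is to exhibit an explicit, large family of states to which Theorem~\ref{t-main} applies, and then simply count it. The key observation is that since $G$ is bipartite with parts $U$ and $V$, no two vertices of $U$ are adjacent in $G$ and no two vertices of $V$ are adjacent in $G$. Hence, starting from $\pi=u_1\cdots u_m v_1\cdots v_n$ and repeatedly applying $\sim_\alpha$-moves that only transpose two adjacent entries lying inside the $U$-block (positions $1,\dots,m$) or inside the $V$-block (positions $m{+}1,\dots,m{+}n$), one can realize any permutation $\sigma$ that permutes $\{u_1,\dots,u_m\}$ among themselves and $\{v_1,\dots,v_n\}$ among themselves while preserving the two blocks; adjacent transpositions generate the full symmetric group on each block, and every intermediate permutation is legal. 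Thus every such $\sigma$ lies in $[\pi]_\alpha$, so for each $k$ with $1\le k\le m+n$ the entire orbit $\{\sigma\cdot S_{0,k}\}$ is contained in $[S_{0,k}]_\pi$ and $\{\sigma\cdot S_{1,k}\}$ in $[S_{1,k}]_\pi$. By Theorem~\ref{t-main}, every state in all these orbits is either a \GOE state or reaches a fixed point.

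Next I would describe these orbits explicitly. Write a state as $X=[X_U,X_V]$ with $X_U\in\mathbb{F}_2^m$ and $X_V\in\mathbb{F}_2^n$. For $1\le k\le m$, the orbit of $S_{0,k}$ under the block-wise permutations consists of all $[X_U,\mathbf{1}]$ with $X_U$ having exactly $k$ zero coordinates; letting $k$ run over $1,\dots,m$ yields exactly $A=\{[X_U,\mathbf{1}]: X_U\neq\mathbf{1}\}$, of size $2^m-1$. For $m<k\le m+n$, writing $k=m+j$, the orbit of $S_{0,k}$ consists of all $[\mathbf{0},X_V]$ with $X_V$ having exactly $j$ zeros, and letting $j$ run over $1,\dots,n$ yields $B=\{[\mathbf{0},X_V]: X_V\neq\mathbf{1}\}$, of size $2^n-1$. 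Symmetrically, the orbits of the states $S_{1,k}$ produce $C=\{[X_U,\mathbf{0}]: X_U\neq\mathbf{0}\}$ and $D=\{[\mathbf{1},X_V]: X_V\neq\mathbf{0}\}$, of sizes $2^m-1$ and $2^n-1$ respectively.

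Then I would verify that $A,B,C,D$ are pairwise disjoint, which is where the hypotheses $m\ge 1$ and $n\ge 1$ enter: each pair forces on $X_U$ or on $X_V$ two incompatible conditions, e.g.\ $A$ forces $X_V=\mathbf{1}$ while $C$ forces $X_V=\mathbf{0}$, and $B$ forces $X_U=\mathbf{0}$ while $D$ forces $X_U=\mathbf{1}$. Hence the union has $2(2^m-1)+2(2^n-1)=2^{m+1}+2^{n+1}-2^2$ states, each a \GOE state or reaching a fixed point; dividing by the total number $2^{m+n}$ of states gives the claimed bound. (One checks $\mathbf{0}\in B$ and $\mathbf{1}\in D$, so the two states of Proposition~\ref{observation} are already included and contribute no extra gain.) There is no genuine obstacle here: the bipartite structure hands us precisely the block-wise permutations needed to inflate the $S_{i,k}$ orbits, and the only real work is the bookkeeping of identifying those orbits and confirming disjointness.
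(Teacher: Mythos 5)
Your proof is correct and follows essentially the same route as the paper: both exploit that the bipartite structure places every block-preserving permutation in $[\pi]_{\alpha}$, identify the resulting orbits of the states $S_{i,k}$, and invoke Theorem~\ref{t-main} to conclude that each such state is a \GOE state or reaches a fixed point. The only difference is bookkeeping: you partition the relevant states into four pairwise disjoint sets of sizes $2^m-1$, $2^m-1$, $2^n-1$, $2^n-1$, whereas the paper counts the four overlapping sets $\{X_U=\mathbf{0}\}$, $\{X_U=\mathbf{1}\}$, $\{X_V=\mathbf{0}\}$, $\{X_V=\mathbf{1}\}$ and subtracts the four doubly counted states, arriving at the same total $2^{m+1}+2^{n+1}-2^2$.
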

%%%
%%%%%%%%%%%%%%%%%%%%%%%%%%%%%%%%%%%%%%%%%%%%%%%%%%%%%%%%%%%%%%%%%%%%%%%%%%%%%%%%%%%%%%%%%%%%%%%%%%%%%%%%%%
%%%
\proof Suppose~$X=[x_{u1},x_{u2},\ldots, x_{um},x_{v1}, x_{v2}, \ldots, x_{vn}]$. For any update
schedule~$\sigma$ of the form~$\sigma=u_{k_1} u_{k_2}\cdots u_{k_m} v_{l_1} v_{l_2} \cdots v_{l_n}$,
we have~$\sigma \in [\pi]_{\alpha}$, since no pair of $u$-vertices or $v$-vertices are adjacent
in~$G$.

In particular, any state~$X$ for which $x_{u_i}=1$ is contained in~$[S_{1,d}]_{\pi}$ where~$d\ge m$,
and any state~$X$ where~$x_{u_i}=0$ is contained in~$[S_{0,d}]_{\pi}$ where $d\ge m$.  Any state~$X$
where~$x_{v_i}=1$ is clearly in~$[S_{0,j}]_{\pi}$, $j\le n$ and any state~$X$ where~$x_{vi}=0$ is
contained in~$[S_{1,j}]_\pi$, $j\le n$.

By construction, note that the states~${\bf 0}$,~${\bf 1}$, the state~$x_{u_i}=1$ while~$x_{v_j}=0$,
and the state~$x_{u_i}=0$ while~$x_{v_j}=1$ are counted twice. Thus, there are at
least~$2^{m+1}+2^{n+1}-2^2$ states each of which is either a \GOE state or reaches a fixed point
of~$(G,f,\pi)$.  Consequently, the probability in question is at
least~$\frac{2^{m+1}+2^{n+1}-2^2}{2^{m+n}}$ and the corollary follows.  \qed

%%%
%%%%%%%%%%%%%%%%%%%%%%%%%%%%%%%%%%%%%%%%%%%%%%%%%%%%%%%%%%%%%%%%%%%%%%%%%%%%%%%%%%%%%%%%%%%%%%%%%%%%%%%%%%
%%%
\section{Sequentializing Monotone Parallel Systems}\label{sec4}
%%%
%%%%%%%%%%%%%%%%%%%%%%%%%%%%%%%%%%%%%%%%%%%%%%%%%%%%%%%%%%%%%%%%%%%%%%%%%%%%%%%%%%%%%%%%%%%%%%%%%%%%%%%%%%
%%%
Let~$G$ be a simple graph with vertex set~$V(G)=\{1,2,\ldots,n\}$ where each vertex~$i$ has a
binary state and a monotone, local function~$f_i$. Updating all vertex states in parallel produces
the parallel monotone dynamical system of~$G$ and~$f$ denoted by~$(G,f)$.

For certain systems, it might not be possible to maintain accurate synchronization of all vertices
in the systems as required under a parallel update. In such cases, a sequential update, possibly
over a different graph $G'$ and monotone, local functions, $f_i'$ generating the same dynamics as
the parallel system may be desirable.
%%
%%%
%%%%%%%%%%%%%%%%%%%%%%%%%%%%%%%%%%%%%%%%%%%%%%%%%%%%%%%%%%%%%%%%%%%%%%%%%%%%%%%%%%%%%%%%%%%%%%%%%%%%%%%%%%
%%%
\begin{definition}
Let~$(G,f)$ be a parallel dynamical system and~$(G',f',\pi)$ an \SDS where $V(G)=V(G')$.
Then~$(G',f',\pi)$ is a sequentialization of~$(G,f)$ and~$(G,f)$ is a parallelization
of~$(G',f',\pi)$ iff~$(G',f',\pi)=(G,f)$.
\end{definition}
%%%
%%%%%%%%%%%%%%%%%%%%%%%%%%%%%%%%%%%%%%%%%%%%%%%%%%%%%%%%%%%%%%%%%%%%%%%%%%%%%%%%%%%%%%%%%%%%%%%%%%%%%%%%%%
%%%
Any \SDS has a parallelization. Namely, given an \SDS, $(G,f,\pi)$, we may assume without lost of
generality that $\pi=12\cdots n$ with the underlying local maps:
\begin{align*}
x_1 \mapsto y_1 &= f_1([x_1,x_2,x_3,\ldots, x_n]),\\
x_2 \mapsto y_2 &= f_2([y_1,x_2,x_3,\ldots, x_n]),\\
x_3 \mapsto y_3 &= f_3([y_1,y_2,x_3,\ldots, x_n]),\\
& \, \, \, \vdots\\
x_n \mapsto y_n &= f_n([y_1,y_2,y_3,\ldots, x_n]).
\end{align*}
Then the parallel system, $(G',f')$, whose local maps are given by
\begin{align}
x_1 \mapsto y_1 &=f'_1([x_1,x_2,x_3,\ldots, x_n])= f_1([x_1,x_2,x_3,\ldots, x_n]), \nonumber\\
x_2 \mapsto y_2 &=f'_2([x_1,x_2,x_3,\ldots, x_n])= f_2([y_1,x_2,x_3,\ldots, x_n]), \nonumber\\
x_3 \mapsto y_3 &=f'_3([x_1,x_2,x_3,\ldots, x_n])= f_3([y_1,y_2,x_3,\ldots, x_n]), \tag{$*$}\\
& \, \, \, \vdots \nonumber\\
x_n \mapsto y_n &= f'_n([x_1,x_2,x_3,\ldots, x_n])=f_n([y_1,y_2,y_3,\ldots, x_n]) \nonumber
\end{align}
and where $G'$ is implied by the dependencies of the $f'_i$ on the $x_j$ represents the
parallelization of $(G,f,\pi)$. If $f_1,\dots,f_n$ are monotone, for $X\le X'$, we have for any $i$,
$(y_1,\dots,y_{i-1},x_i,\dots,x_n)\le (y_1',\dots,y_{i-1}',x_i',\dots,x_n')$ and hence
$$
f'_i(X)=f_i(y_1,\dots,y_{i-1},x_i,\dots,x_n)\le f_i(y_1',\dots,y_{i-1}',x_i',\dots,x_n')=f'_i(X').
$$
As a result, if $(G,f,\pi)$ is monotone, then its
parallelization, $(G',f')$, is a monotone, parallel system.

We next analyze whether or not a parallel system~$(G',f')$ can be sequentialized. For particular
classes of systems, such a sequentialization is always possible, for instance, linear, parallel
systems can always be sequentialized as linear \SDS~\cite{chr}. In the following we shall show that
there exist monotone parallel systems for which there is no monotone sequentialization.

Let~$(G,f)$ be a monotone, parallel dynamical system. Lemma~\ref{lem:3mainlem} implies that if~$X$
is a non-trivial periodic point of $(G,f)$, then for any $\pi\in \vartheta_0(X)\bigcup
\vartheta_1(X)$, the \SDS~$(G',f',\pi)$ is not a sequentialization of~$(G,f)$.
%%%
%%%%%%%%%%%%%%%%%%%%%%%%%%%%%%%%%%%%%%%%%%%%%%%%%%%%%%%%%%%%%%%%%%%%%%%%%%%%%%%%%%%%%%%%%%%%%%%%%%%%%%%%%%%
%%%%
\begin{theorem}
There exists a monotone parallel dynamical system which does not have a monotone sequentialization.
\end{theorem}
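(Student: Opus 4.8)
The plan is to exhibit an explicit small example of a monotone parallel dynamical system whose phase space cannot be produced by any monotone \SDS on the same vertex set, and the natural target is a system possessing a non-trivial limit cycle that is ``unavoidable'' in the sense of Lemma~\ref{lem:3mainlem}. Concretely, I would look for a monotone parallel system $(G,f)$ on $n$ vertices admitting a limit cycle $A$ of length $2$ (or a short cycle) whose two states $X,Y$ together satisfy $\vartheta_0(X)\cup\vartheta_1(X)\cup\vartheta_0(Y)\cup\vartheta_1(Y)=\mathbb{S}_n$; then by Lemma~\ref{lem:3mainlem} no monotone \SDS $(G',f',\pi)$ (with any $\pi\in\mathbb{S}_n$, any $G'$ on $V(G)$, any monotone $f'$) can have a phase space containing a non-trivial periodic point among $\{X,Y\}$, so in particular $(G,f)$ has no monotone sequentialization.

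First I would identify a candidate state set $A$ that is ``schedule-universal'' in the above sense. The cleanest choice is $n=2$: take $A=\{[0,1],[1,0]\}$. Here $\vartheta_0([0,1])$ consists of the schedules $\pi$ with $\pi^{-1}\cdot[0,1]=S_{0,k}$ for some $k$, which includes $\pi=12$ (giving $S_{0,1}=[0,1]$); similarly $\vartheta_0([1,0])$ contains $\pi=21$. Since $\mathbb{S}_2=\{12,21\}$, we get $\vartheta_0([0,1])\cup\vartheta_0([1,0])=\mathbb{S}_2$ already. So it suffices to build a monotone parallel system on two vertices whose dynamics has $[0,1]\leftrightarrow[1,0]$ as a $2$-cycle: take $G=K_2$ and $f_1(x_1,x_2)=x_2$, $f_2(x_1,x_2)=x_1$ (the ``swap''), both of which are monotone (indeed projections). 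The parallel map sends $[x_1,x_2]\mapsto[x_2,x_1]$, so $\{[0,1],[1,0]\}$ is a non-trivial limit cycle.

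The remaining step is to invoke Lemma~\ref{lem:3mainlem}: for either $\pi\in\{12,21\}=\mathbb{S}_2$ we have $\pi\in\vartheta_0([0,1])\cup\vartheta_0([1,0])$, so some state of the $2$-cycle lies in $\vartheta_0(X)$ for that $\pi$; hence no monotone \SDS with that update schedule can realize a phase space in which that state is a non-trivial periodic point. Since a sequentialization must reproduce the entire phase space exactly — in particular the $2$-cycle — no monotone \SDS $(G',f',\pi)$ on two vertices can be a sequentialization of $(G,f)$, for any $\pi$. This establishes the theorem. (One can also give an $n=3$ or larger version using a cyclic shift on $\{[0,1,1],[1,0,1],[1,1,0]\}$ if a more ``symmetric'' example is desired, but the $n=2$ swap is the minimal witness.)

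I expect the main obstacle to be purely expository rather than mathematical: one must be careful that the notion of sequentialization in the Definition really forces equality of phase spaces $(G',f',\pi)=(G,f)$ on the \emph{same} vertex set, so that the limit cycle of the parallel system must appear verbatim in any candidate \SDS; and one must double-check the edge cases in the definition of $S_{0,k}$ (the ranges of $k$, whether $k=0$ or $k=n$ is allowed) to be sure the relevant schedules genuinely lie in $\vartheta_0$ or $\vartheta_1$. Once those conventions are pinned down, the argument is immediate from Lemma~\ref{lem:3mainlem}, and essentially no computation is needed beyond verifying monotonicity of the two local functions and the action of the parallel map.
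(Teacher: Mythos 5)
Your proposal is correct and is essentially the paper's own argument specialized to $n=2$: the paper constructs, for general $n$, a monotone parallel map that cycles through the entire middle layer of $\mathbb{F}_2^n$ (an antichain of size ${n\choose \lfloor n/2\rfloor}$ whose sets $\vartheta_0(X)$ exhaust $\mathbb{S}_n$) and then invokes Theorem~\ref{thm:maxlength}, whereas your swap on $K_2$ is exactly that construction for $n=2$ and you appeal directly to Lemma~\ref{lem:3mainlem}, which is the lemma Theorem~\ref{thm:maxlength} itself rests on. The verification that every $\pi\in\mathbb{S}_2$ lies in $\vartheta_0([0,1])\cup\vartheta_0([1,0])$ is accurate, so the argument goes through.
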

%%%
%%%%%%%%%%%%%%%%%%%%%%%%%%%%%%%%%%%%%%%%%%%%%%%%%%%%%%%%%%%%%%%%%%%%%%%%%%%%%%%%%%%%%%%%%%%%%%%%%%%%%%%%%%%
%%%%
\proof By Theorem~\ref{thm:maxlength}, it is sufficient to find a monotone, parallel dynamical
system which has a limit cycle of length~${n\choose \lfloor \frac{n}{2}\rfloor}$.
In~\cite{goles-mon}, such a monotone, parallel system is constructed: let~$A=\{X_0,X_1,\ldots,
X_{p-1}\}$ be the set of states having exactly~${\lfloor \frac{n}{2}\rfloor}$
coordinates~$x_i=1$,~$p={n\choose \lfloor \frac{n}{2}\rfloor}$ and~$A^c=\mathbb{F}_2^n\setminus A$.
We construct a parallel dynamical system
\begin{equation*}
 (G,f) \colon X_i\mapsto X_{i+1} \;,
\end{equation*}
where the indices are taken modulo $p$,
and for $Y\in A^c$,
\begin{equation*}
(G,f) : Y\mapsto
  \begin{cases}
    {\bf 1}, & \text{if there exists $X\in A$ such that $X<Y$;}  \\
    {\bf 0}, & \text{if there exists $X\in A$ such that $X>Y$}.
  \end{cases}
\end{equation*}
By Proposition~\ref{P:uncomp}, this map is well defined and monotone, whence the theorem.\qed

In the following we shall further discuss the sequentialization of monotone, parallel
systems. Suppose the monotone parallel system~$(G',f')$ has a sequentialization~$(G,f,\pi)$
for~$\pi=12\cdots n$\footnote{Without loss of generality we may assume that $\pi=12\cdots
  n$ by relabeling.} as in~$(*)$, above.

Suppose for $[0,x_2,\ldots, x_n]<[1,x'_2,\ldots, x'_n]$ where $x_i<x'_i$ for some $2\leq i \leq n$, we have
\begin{align*}
  f_1([1,x_2,\ldots, x_n]) &= f'_1([1,x_2,\ldots, x_n]) =0,\\
  f_1([0,x'_2,\ldots, x'_n]) &= f'_1([0,x'_2,\ldots, x'_n]) =1 \;.
\end{align*}
As~$[1,x_2,\ldots, x_n]$ and~$[0,x'_2,\ldots, x'_n]$ are not comparable, such a monotone function
$f'_1$ exists.  By construction,~$[0,x_2,\ldots, x_n]\le [1,x_2,\ldots, x_n]$, and in view of $(*)$,
we have
\begin{equation*}
f_2([0,x_2,\ldots, x_n])=f'_2([1,x_2,\ldots, x_n]),
\qquad
f_2([1,x'_2,\ldots, x'_n])=f'_2([0,x'_2,\ldots, x'_n]) \;.
\end{equation*}
However, since~$[1,x_2,\ldots, x_n]$ and~$[0,x'_2,\ldots, x'_n]$ are not comparable, the
monotonicity of~$f_2'$ does not necessarily imply that~$f'_2([1,x_2,\ldots, x_n])\le
f'_2([0,x'_2,\ldots, x'_n])$, that is, we cannot conclude that~$f_2$ is monotone.  Accordingly,
monotonicity is not guaranteed, even if the underlying local maps of the parallel system are
monotone.
%The next lemma shows, that if the maps $f_i$ were indeed monotone, then they can always be extended to $\mathbb{F}_2^n$.
%%%
%%%%%%%%%%%%%%%%%%%%%%%%%%%%%%%%%%%%%%%%%%%%%%%%%%%%%%%%%%%%%%%%%%%%%%%%%%%%%%%%%%%%%%%%%%%%%%%%%%%%%%%%%%%
%%%%
\begin{lemma}\label{mono-ext}
Suppose $g\colon A\rightarrow \mathbb{F}_2$ is monotone where $A\subset \mathbb{F}_2^n$. Then there
exists a monotone function $\hat{g} \colon \mathbb{F}_2^n \rightarrow \mathbb{F}_2$ such
that~$\hat{g}(X)=g(X)$ if $X\in A$.
\end{lemma}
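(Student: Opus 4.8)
The plan is to extend the partially defined monotone function $g$ to all of $\mathbb{F}_2^n$ by the obvious ``downward/upward saturation'' construction and then check monotonicity. Concretely, I would define
\begin{equation*}
  \hat{g}(Z) = \begin{cases} 1, & \text{if there exists } X\in A \text{ with } g(X)=1 \text{ and } X\le Z,\\ 0, & \text{otherwise.}\end{cases}
\end{equation*}
This is the smallest monotone function on $\mathbb{F}_2^n$ that takes the value $1$ on the ``$1$-set'' of $g$; dually one could saturate the $0$-set downward, but the formula above is the cleanest to work with.

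First I would verify that $\hat g$ agrees with $g$ on $A$. If $X\in A$ and $g(X)=1$, then $X\le X$ witnesses $\hat g(X)=1$. Conversely, if $X\in A$ and $\hat g(X)=1$, there is $X'\in A$ with $g(X')=1$ and $X'\le X$; since $g$ is monotone \emph{on $A$} and both $X',X\in A$, we get $g(X)\ge g(X')=1$, so $g(X)=1$. Hence $\hat g(X)=g(X)$ for all $X\in A$. Second, monotonicity of $\hat g$ on all of $\mathbb{F}_2^n$ is immediate from the definition: if $Z\le Z'$ and $\hat g(Z)=1$, pick $X\in A$ with $g(X)=1$ and $X\le Z\le Z'$; the same $X$ witnesses $\hat g(Z')=1$. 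So $\hat g(Z)\le \hat g(Z')$.

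The only genuine subtlety — and the step I would flag as the ``obstacle,'' though it is a mild one — is that the agreement argument crucially uses that $g$ is monotone \emph{as a function on the sub-poset $A$}; the value $g(X)$ for $X\in A$ is pinned down by the existence of a smaller element of $A$ carrying value $1$, precisely because no element of $A$ below $X$ can have value $1$ while $X$ has value $0$. This is exactly what the hypothesis ``$g\colon A\to\mathbb{F}_2$ is monotone'' supplies, so there is no gap. I would close by remarking that $\hat g$ constructed this way is in fact the unique \emph{minimal} monotone extension, and that an entirely symmetric construction (saturating the preimage of $0$ downward) yields the maximal monotone extension; any monotone extension of $g$ lies between these two. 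This observation is what will be needed in the subsequent discussion of when a monotone parallel system admits a monotone sequentialization, since the freedom in choosing $f_1$ (hence the induced $f_2$, etc.) is precisely the freedom of choosing a monotone extension of a partially specified monotone function.
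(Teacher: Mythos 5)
Your proof is correct. It constructs the same object the paper does---the minimal monotone extension of $g$---but does so via an explicit closed-form definition ($\hat g(Z)=1$ iff some $X\in A$ with $g(X)=1$ lies below $Z$), whereas the paper builds $\hat g$ by an iterative procedure that adjoins one point $Y$ at a time and assigns it the value $\max\{g(Z): Z<Y,\ Z\in B\}$ over the already-defined set $B$. The two routes yield the same function (unwinding the paper's recursion, every $1$-value traces back to an $A$-witness below the point in question), but your version has the advantage that both required properties---agreement on $A$ and global monotonicity---are verified in one line each, with the role of the hypothesis ``$g$ monotone on $A$'' isolated exactly where it is needed (ruling out a $1$-valued element of $A$ below a $0$-valued one). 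The paper's procedure, by contrast, merely asserts that the output is monotone, and as printed it computes an auxiliary quantity $Max$ that is never used (and is written as a maximum over the elements above $Y$ where a minimum would be the natural bound to check against). Your closing remark that the upward saturation of the $1$-set and the downward saturation of the $0$-set bound all monotone extensions is a correct and relevant addition, though not needed for the lemma itself.
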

%%%
%%%%%%%%%%%%%%%%%%%%%%%%%%%%%%%%%%%%%%%%%%%%%%%%%%%%%%%%%%%%%%%%%%%%%%%%%%%%%%%%%%%%%%%%%%%%%%%%%%%%%%%%%%%
%%%%
\proof For~$X\in A$, we set~$\hat{g}(X)=g(X)$. We extend~$\hat{g}$ from~$A$ to~$\mathbb{F}_2^n$
inductively, using the following procedure:
\begin{description}
  \item[Step $1$.] Set $B=A$.
  \item[Step $2$.] Let $Y\in \mathbb{F}_2^n \setminus B$, and
                   let $Max=\max\{g(Z): (Z>Y) \wedge (Z\in B) \}$
                   and $Min=\max\{g(Z): (Z<Y) \wedge (Z\in B) \}$.
  \item[Step $3$.] Set $\hat{g}(Y)=Min$ and set $B=B\bigcup \{Y\}$.
                   If $B\neq \mathbb{F}_2^n$, then go to Step $2$.
\end{description}
The above procedure generates a monotone function~$\hat{g}\colon \mathbb{F}_2^n \rightarrow
\mathbb{F}_2$. \qed
%%%%%%%%%%%%%%%%%%%%%%%%%%%%%%%%%%%%%%%%%%%%%%%%%%%%%%%%%%%%%%%%%%%%%%%%%%%%%%%%%%%%%%%%%%%%%%%%%%%%%
\begin{proposition}
Suppose the monotone parallel system~$(G',f')$ has a sequentialization~$(G,f,\pi)$
where~$\pi=12\cdots n$. For $1\leq i \leq n$, let~$A_i=\{Z: Z=f'_i(X), X\in \mathbb{F}_2^n\}$.
Then~$(G',f')$ can be sequentialized as a monotone \SDS with respect to~$\pi$ if and only if~$f_i$
is monotone on~$A_i\subseteq \mathbb{F}_2^n$ for any~$1\leq i \leq n$.
\end{proposition}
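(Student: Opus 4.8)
\emph{Proof proposal.}
The plan is to prove both implications by isolating, for each $i$, the set $A_i$ of tuples $[f'_1(X),\dots,f'_{i-1}(X),x_i,\dots,x_n]$ produced by $X=[x_1,\dots,x_n]$ as $X$ ranges over $\mathbb{F}_2^n$; this is precisely the set of arguments ever handed to $f_i$ when $(G,f,\pi)$ is iterated. I would first record two elementary facts. (i) \emph{$f_i|_{A_i}$ is forced by $(G',f')$:} reading off $(*)$, every $w=[f'_1(X),\dots,f'_{i-1}(X),x_i,\dots,x_n]\in A_i$ satisfies $f_i(w)=f'_i(X)$, so the property ``$f_i$ is monotone on $A_i$'' is intrinsic to the parallel system and does not depend on which sequentialization was used to define $f_i$. (ii) \emph{Simulation:} if $(H,g,\pi)$ is any \SDS with $\pi=12\cdots n$ whose local maps satisfy $g_i|_{A_i}=f_i|_{A_i}$ for all $i$, then an induction on the update step --- literally running the recursion $(*)$ --- shows that after the first $k$ updates the state is $[f'_1(X),\dots,f'_k(X),x_{k+1},\dots,x_n]$, which lies in $A_{k+1}$; hence $(H,g,\pi)=(G',f')$.

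For the ``if'' direction I would assume each $f_i$ is monotone on $A_i$, apply Lemma~\ref{mono-ext} to obtain a monotone extension $\hat f_i\colon\mathbb{F}_2^n\to\mathbb{F}_2$ with $\hat f_i|_{A_i}=f_i|_{A_i}$, and let $\hat G$ record the dependencies of the $\hat f_i$. Fact (ii) then gives $(\hat G,\hat f,\pi)=(G',f')$, so $(\hat G,\hat f,\pi)$ is the desired monotone sequentialization of $(G',f')$ with respect to $\pi$.

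For the ``only if'' direction, suppose $(G',f')$ admits a monotone sequentialization $(\hat G,\hat f,\pi)$. Since $\pi=12\cdots n$, once vertex $i$ is updated it is never touched again, so the $i$-th coordinate of $(G',f')(X)$ equals both $f_i$ and $\hat f_i$ evaluated at the state reached after the first $i-1$ updates. An induction on $i$ shows that this state is the same for the two systems and equals $[f'_1(X),\dots,f'_{i-1}(X),x_i,\dots,x_n]$, which runs over all of $A_i$ as $X$ varies. Hence $\hat f_i$ and $f_i$ agree on $A_i$; since $\hat f_i$ is monotone on $\mathbb{F}_2^n$, so is the restriction $\hat f_i|_{A_i}=f_i|_{A_i}$. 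As this holds for every $i$, the equivalence follows.

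The single delicate point is the bookkeeping inside these two inductions: one must verify that the partially updated state after $k$ steps is exactly $[f'_1(X),\dots,f'_k(X),x_{k+1},\dots,x_n]$ and that this tuple lies in $A_{k+1}$, so that replacing $f_{k+1}$ by any map agreeing with it on $A_{k+1}$ leaves the $(k+1)$-st coordinate unchanged. Everything else --- well-definedness of $f_i|_{A_i}$, the fact that a restriction of a monotone map is monotone, and the existence of a monotone extension --- is either immediate or is supplied by Lemma~\ref{mono-ext}.
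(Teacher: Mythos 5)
Your proposal is correct and follows essentially the same route as the paper: both directions rest on the observation that the dynamics depend only on $f_i$ restricted to the set $A_i$ of partially updated states $[f'_1(X),\dots,f'_{i-1}(X),x_i,\dots,x_n]$, with Lemma~\ref{mono-ext} supplying the monotone extension for the ``if'' direction and the agreement of $\hat f_i$ and $f_i$ on $A_i$ (via $(*)$) giving the ``only if'' direction. Your explicit induction on the update step just spells out what the paper's proof leaves implicit.
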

%%%%%%%%%%%%%%%%%%%%%%%%%%%%%%%%%%%%%%%%%%%%%%%%%%%%%%%%%%%%%%%%%%%%%%%%%%%%%%%%%%%%%%%%%%%%%%%%%%%%%%
\proof First, if~$(G,f,\pi)$ is a sequentialization of~$(G',f')$, then any
\SDS~$(\hat{G},\hat{f},\pi)$ is a sequentialization of~$(G',f')$ as long as~$\hat{f}_i$ and~$f_i$
agree on~$A_i$, that is, the dynamics does not depend on the behavior of~$f_i$
on~$\mathbb{F}_2^n\setminus A_i$.

Secondly, if~$f_i$ is monotone on~$A_i$ then, by Lemma~\ref{mono-ext}, there exists~$\hat{f}_i$,
which is monotone on~$\mathbb{F}_2^n$ and agrees with~$f_i$ on~$A_i$. Hence~$(G',f')$ can be
sequentialized as a monotone \SDS.

Finally, if~$(G',f')$ can be sequentialized via the monotone \SDS~$(\hat{G},\hat{f},\pi)$, both~$(G',f')$
and~$(G,f,\pi)$ as well as~$(G',f')$ and~$(\hat{G},\hat{f},\pi)$ satisfy~$(*)$. Comparing the two systems
of equations, we observe that~$\hat{f}_i$ and~$f_i$ agree on~$A_i$. By assumption,~$\hat{f}_i$ is monotone
on~$\mathbb{F}_2^n$, and thus~$f_i$ is monotone on~$A_i$.
\qed

%%%%%%%%%%%%%%%%%%%%%%%%%%%%%%%%%%%%%%%%%%%%%%%%%%%%%%%%%%%%%%%%%%%%%%%%%%%%%%%%%%%%%%%%%%%%%%%%%%%%
\subsection*{Acknowledgments}
We thank Andrew Warren for discussions. The third author is a Thermo Fisher Scientic Fellow in Advanced Systems for Information Biology and acknowledges their support of this work.
%%%%%%%%%%%%%%%%%%%%%%%%%%%%%%%%%%%%%%%%%%%%%%%%%%%%%%%%%%%%%%%%%%%%%%%%%%%%%%%%%%%%%%%%%%%%%%%%%%%%

\end{document}